\crefname{hypothesis}{Hypothesis}{Hypotheses}
\title{ON DISTRIBUTIONAL AND ASYMPTOTIC RESULTS FOR
EXPONENTIAL FUNCTIONALS IN RENEWAL-REWARD
PROCESSES DESCRIBING INSURANCE RISK MODELS
\thanks{
\funding{This research was partially supported by the European Union's Seventh Framework Programme for research, technological development and demonstration under grant agreement no 318984 - RARE.
}
}
}
\author{J. Akahori\thanks{Ritsumeikan University, Kusatsu, Japan
  (\email{akahori@se.ritsumei.ac.jp}, 
  ).}
\and C. Constantinescu\thanks{University of Liverpool, Liverpool, UK
  (\email{C.Constantinescu@liverpool.ac.uk }).}
\and 
Y. Imamura\thanks{Kanazawa Unikversity, Kanazawa, Japan
(\email{imamuray@se.kanazawa-u.ac.jp}).}
\and 
HH. Pham\thanks{
International University,  Vietnam National University Hochiminh city, Vietnam
  (\email{phha@hcmiu.edu.vn}).}
}
\begin{document}

\maketitle
\begin{abstract}

Inspired by the double-debt problem in Japan where the mortgagor has to pay the remaining loan even if their house was destroyed by a catastrophic event, we model the lender's cash flow, by an exponential functional of a renewal-reward process. We propose an insurance add-on to the loan repayments and analyse the asymptotic behavior of the distribution of the first hitting time, which represents  the probability of full repayment. We show that the finite-time probability of full loan repayment converges exponentially fast to the infinite-time one. In a few concrete scenarios, we calculate the exact form of the infinite-time probability and the corresponding premiums.

\end{abstract}

\begin{keywords}
exponential functional, renewal-reward process, ruin probability, double-debt problem, Fredholm integro-differential equation, stochastic fixed point equation
\end{keywords}

\begin{AMS}
  68Q25, 68R10, 68U05
\end{AMS}

\section{Introduction}


We develop a mathematical model for mortgage
loans so that we can estimate/measure the risks of the lenders. 
The model is 
inspired by a framework designed
to hedge the so-called 
``double-debt problem", 
introduced by Ohgaki \cite{ADEH1}.  For analyzing the risks, we employ methods from risk theory, culminating in an analysis of solutions integro-differential equations with boundary conditions.\\

{\bf The Motivation.}
After the 2011 Great East Japan Earthquake, a lot of people who lost their houses are still kept under the due of their mortgage loan, which made their recovery rather difficult. 
It is commonly referred to as the double-debt problem. 
As Japan is exposed to the risk of
further big earthquakes, 
Ohgaki \cite{ADEH1} proposed a practical framework within 
the regime of the Japanese financial system,
where the mortgage loan is combined with a marketized earthquake insurance, like a CAT bond. This paper mathematically formalizes  Ohgaki's \cite{ADEH1} proposed scheme. 
The proposed model will be in continuous time as the mortgage payments of individual mortgagors are not likely to be paid at the same time in a given payment period. However, in order to marketize this insurance-mortgage-security, we need to know the risk exposure. \\

 {\bf The Model.}
%
Specifically, we consider an initial loan $u$ given to a cohort of borrowers (as mortgages) that are paying it back continuously at a constant rate $c>0$. 
Although the loans are fixed term, as mentioned before, the assumption of continuous payments is reasonable, due to the cohort effect, 
meaning that new customers are coming in as others are leaving the programme. Furthermore, 
we consider that disasters occur at random times $T_i$ and after each disaster $i$ only a ratio $e^{-X_i}$ of borrowers are left to repay the loan. 
Thus, as time passes and disasters occur we have fewer and fewer borrowers paying back the loan (at the same rate $c$). 
We define as default/ruin the event that the (cash contribution) "process $U_t$ never reaching $u$", meaning that the borrowers will never fully repay the loan. Thus, the probability of default, $\phi (u),$ describes the probability of the process $U_t$ never reaching level $u$, while the survival probability, $\psi(u),$ defines the probability of the first crossing of the level $u$, or {\it probability of full loan repayment.}

Let $ (T_i, X_i) $, $ i=1, 2 \cdots $ be a marked point process, with $T_0 =0$. 
In our model, the cash flow process of the 
mortgage loan at time $t$
is given by
\if0
\begin{equation*}
U_{t}= c \sum_{n=0}^\infty 1_{ [T_n , T_{n+1}) } (t) 
\{ (t-T_n) e^{-\sum_{i=0}^n X_i}
+ \sum_{i=1}^{n} (T_i -T_{i-1})e^{-\sum_{k=1}^{i-1}X_k} \}, 
\end{equation*}
\fi
\begin{equation}\label{eq:model}
\begin{split}
U_{t}&= c \sum_{n=0}^\infty 1_{ [T_n , T_{n+1}) } (t) 
\{ (t-T_n) e^{-\sum_{i=1}^n X_i}
+ \sum_{i=1}^{n} (T_i -T_{i-1})e^{-\sum_{k=1}^{i-1}X_k} \}
, 
\end{split}
\end{equation}
\color{black}
where $ c $ is a positive constant,  $ T_i $ is the occurrence time of the $ i $-th disaster and $X_{i}$
is the rate of the borrowers who survived the $ i $-th disaster, with the requirement $ X_{i}>0$ (it cannot be zero). Here we denote $\sum_{i=n}^m  \cdots = 0$ for $m <n$. 
The process can be understood as an
{\em exponential functional}
of the  renewal reward process
\begin{equation*}
    U_t = c \int_0^t
    e^{R_s} \,ds
\end{equation*}
where $R$
is the renewal-reward process associated 
with $ (T_i, X_i) $,
$ i=1, 2, \cdots $,
that is, 
\begin{equation*}
     R_t := - \sum_{j=1}^{J_t} X_j,
     \quad J_t = \sum_{k=1}^\infty
     1_{\{ T_i \leq  t \}}.
\end{equation*}
\begin{figure}[htbp]
\centering
\includegraphics[width = 5.5cm]{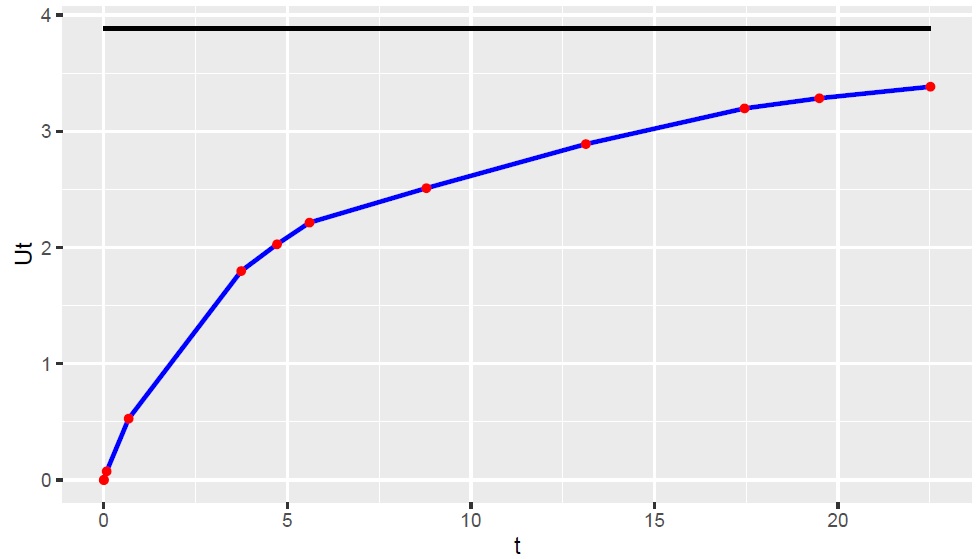}
\caption{A possible sample path.}
\label{fig.corina}
\end{figure}
Figure \ref{fig.corina} is the graphical representation of the repayment process. Each red dot represents a disaster hit. After every hit, the rate of repayment decreases since the affected mortgagors are forced to leave the pool. The repayment rate therefore slows down as disasters occur. Our aim is to make sure the path reaches the black line (the total amount lended $u$), in other words the full amount is repaid, within a finite time. Note that the model was inspired by the Japanese double-debt problem, but could account for any risk processes that temporises the effect of claims/losses.

Let $\tau_u $ be the first hitting time of $ u > 0 $, meaning the time the process $U$ (which starts at zero) reaches $u$ for the first time, which means the time the loan has been paid back in full. We shall study the probability of full loan repayment $\psi$, before a given time $t$,
\begin{equation*} 
\psi (u,t) := P(\tau_u < t),
\end{equation*}
and its corresponding probability of loan default in finite time horizon
\begin{equation*} 
\phi (u,t) := P(\tau_u  \geq  t) = 1 - \psi(u,t), 
\end{equation*}
for $u>0$ and $t >0$. \\

 {\bf The Proposal.}
%
The solution proposed in this paper is to include a small insurance premium in the contracted mortgage payments 
and in return the mortgagor would be relieved of the outstanding mortgage principal in the event of property destruction by a natural disaster. 
With every disaster hit, it is expected that properties will be destroyed and mortgagors will subsequently default. 
The premium will be set to ensure that the total amount of all {\it combined} principals is completely paid off within a finite time-scale. 
In our model, the premium rate for each mortgagor is going to be the same indifferent of the size of their loan or region of residence. 

The combined premiums from all of the mortgagors would be enough to cover the defaulted loans due to natural disasters. 
If a natural disaster occurs and causes damages of greater amount than the current reserve,  the remainder of damages costs would be paid off gradually as the premiums continue to be collected from the mortgagors not affected by the disaster. 
The insurance premium is calculated and set to ensure that the total amount of combined principals could be paid off in a finite time. \\

{\bf The Results.}
%
We derive the probability of full loan repayment,  $\psi$,
as a solution of 
an integral equation.
This expression could be further used for the numerical
evaluation of the default probability, 
but in this paper, 
for specific examples, 
we propose to use a calibration of the parameters of
the infinite-time full-loan-repayment-probability  
\begin{equation*}
\psi^{\infty} (u) := \lim_{t \to \infty} \psi (u,t) 
= P(\tau_u < \infty), 
\end{equation*}
and its counter-part, the probability of loan default
\begin{equation}\label{inftyruin}
\phi^{\infty} (u) := P(\tau_u  =  \infty) = 1 - \psi^{\infty}(u), 
\end{equation}
for all $ u > 0 $.
This strategy of approximation can be justified 
by Theorem \ref{Thm-convergence+} which shows that 
the difference, 
between the finite and infinite time probabilities, decays exponentially fast, as $ t \to \infty $, namely 
for any $u>0$, 
there exists $C(u)>0$ 
such that 
\begin{equation*}
0 \leq \psi^{\infty}(u) 
-\psi(u,t)\leq C(u) e^{-\xi  t},\ t \geq 0, 
\end{equation*} 
for some $\xi  >0$. 
%
The numerical experiments in 
section \ref{Nclexp} show that 
under a practical parameter set, the difference is negligible.

Furthermore, under the infinite horizon setting,  
as in \cite{Ragnar}, we will discuss the risk parameters for which the ruin probability stays within a range established by say a supervisory authority.
We will present the cases of Poisson, non-Poisson and randomized arrivals, all the while the ratio of clients remaining from the initial cohort is exponentially distributed. 
Concretely, we are considering  models with Poisson arrivals with parameter $\lambda$, followed by models with non-Poisson arrivals, as in Gamma$(2, \lambda)$ inter-arrival times, to account for some memory in the process. We study the probability of default or full repayment, by means of integro-differential equations and Laplace transforms, as in e.g. \cite{MR2586155, MR2965925, MR3033139}. Conjointly, we discuss the case of random Poisson parameter $\lambda$, equivalently to the distribution of inter-arrival time $W$ being an exponential random variable  with random parameter $\Lambda$,  to account for the clustering effect of the earthquake events. Thus one could estimate $\Lambda$ and  dynamically adjust the risks involved, as in \cite{Ragnar}.\\

{\bf Connections to exponential functionals of L\'evy processes.}
%
The default (or survival) probability in our  model can be interpreted as the distribution of an exponential functional of a stochastic process. Exponential functionals of L\'evy processes abound in both finance and insurance mathematics literature. 
In finance, the distribution of the exponential functional of a process is key in computing the Asian option price, and has attracted a lot of attention, featuring different approaches such as \cite{carmona1997distribution},
\cite{vecer2001pde},
and \cite{yor1992on}. Some further references on applications of exponentials of Brownian motion can be found in \cite{yor2001exponential}. In insurance, \cite{feng2019exponential} used the exponential functional of a L\'evy process in the analysis of an insurer liabilities when its variable annuity guarantees benefits on an exponential maturity of counterpart. 
%
%
%
Moreover, {\it perpetuities}, that can be seen as exponential functionals of a renewal-reward process, arise in a diverse range of fields, see \cite{vervaat_1979} 
and \cite{MR1311930} 
for a variety of references and examples of applications, 
including insurance \cite{MR1817589, MR2156557} 
and economics \cite{Dassios}. 
In \cite{MR1129194}, the distribution of a perpetuity presents applications in risk theory and pensions. 
\\

\color{black}
{\bf Connections to stochastic perpetuities.}
We note that our cash contribution process $ U $ is closely related to a
{\em stochastic perpetuity}, 
which is the present value of a stochastically discounted  series of independent, identically distributed (i.i.d.)
cashflows, that is, 
\begin{equation}\label{perp}
     D=\sum_{k\geq 1}
     \left(\prod_{i=1}^k d_i \right) 
     C_{k},
\end{equation}
where the cashflows,
$ (C_i) $, 
and the stochastic discount rates, $ (d_i) $, 
are mutually independent,  identically distributed
 sequences. 
The full-repayment probability in {\it infinite time} \eqref{inftyruin} 
is understood as the distribution function of 
a stochastic perpetuity. 
When considering cash flows $ C_k, $
arriving
at $ T_k $, 
$ k=1, 2 \cdots $,  and
discounted at a rate $ \delta $,
its present value
is given by 
$
    \sum_{k\geq 1}
    e^{-\delta T_k} C_k,
$
which is identified with $ D$ of \eqref{perp}
if we assume 
$ T_i -T_{i-1} $, 
$ i=1, 2, \cdots $
are i.i.d.,
by setting 
$d_i= e^{-\delta (T_i-T_{i-1})} $. 
In \cite{MR1129194} 
it is shown 
to exhibit a Gamma distribution for Poisson inter-arrivals of claims of exponentially distributed intensity (ex. 5.1.2.),
result that we can retrieve with our approach. 
Similar models are interpreted as risk models with stochastic returns on investments (see e.g. \cite{MR1817589, MR2156557, MR2538076}). 

The main difference between our model and a stochastic perpetuity (or a stochastic interest risk model) consists in the fact  that we are dealing with a stochastic process, whereas the perpetuity models are random variables. 
They are infinite sums of random variable, discounted (stochastic discounting) at fixed times, 
while our model is, 
at each time $ t $, 
a finite sum of random variables which can be seen as ``discounted at random times".\\

\color{black}
{\bf Connections to stochastic fixed point equations.}
The integral equation we derive for
the infinite horizon full repayment probability
can be seen as 
a {\em stochastic fixed point equation}.
As 
we will be looking at the tail, 
the structure of 
corresponding 
stochastic equation \eqref{eq:model} resonates with the {\it stochastic fixed point equations} described in \cite{MR1097468} 
for the first time, \cite{MR2538076} 
for insurance applications and amply analysed in the book of Mikosch \cite{Miko}. 
For such stochastic model of perpetuity-type, for various dependence structures, 
it is shown the tails of the distribution are regularly varying both in the univariate and multivariate cases. 
Here references \cite{kesten1973, MR1097468, MR3497380, MR2538076, MR2156557, MR1817589}. 
More recent literature 
could be found  in \cite{MIKOSCH2018, MR3981146}. 

Thus, in studying "default probabilities in infinite horizon", 
we are lead to stochastic fixed point equations, 
which are equal in distribution
to the ones for stochastic perpetuity.
Moreover, by choosing specific distributions for the random variables involved, the time "of discounting", or when an event occurs that would reduce the number of mortgagors, 
we can obtain explicit solutions of the tail distribution/ loan repayment. Stochastic fixed point equations literature abounds in asymptotic results. \\

\if
The main contribution  of our approach is that it permits an analysis of the finite time case, which would be the equivalent of the price of a stochastic annuity (not perpetuity),  
The distribution of a stochastic annuity is a non-trivial object, see discussion on stochastic life annuities in {\color{red} Dufresne (2007).}\\
\fi

 {\bf The Structure of the Paper.}
%
The rest of the paper is organized as follows.
In section \ref{sec2} we give 
a general equation for the loan default probability $\phi$ in a loan model driven by a general marked point process $(T_i,X_i). $ 
The finite time 
and infinite time 
default probabilities are expressed via Fredholm equations, with solutions to be analyzed asymptotically or numerically, if not readily available in closed forms.
We present an approximation of the finite-time probability from its infinite-time counterpart, in \ref{sec2-5-2}. 
In section \ref{sec3},
based on the infinite-horizon equation,
we consider the case of $X$ exponentially distributed and derive the ruin probability when $W$ are exponentially and 
Erlang distributed, respectively, under both independent and conditionally independent scenarios.
Finally, we calculate the ruin probability in the case of randomized arrival times when $W$ is exponentially distributed with random parameter $\Lambda$. One can then numerically calculate the insurance premium to be incorporated in the mortgage plan, such that the probability of default stays within a small range, as in Section \ref{numerical}. We conclude in Section \ref{conclusion}.\\

\noindent{\it Note}: In this paper, for keeping the context clear for the reader, the classical notation for ruin probability $\psi$ will be referred to as the {\it probability of loan repayment,}  or {\it probability of full loan repayment}, while the classical probability of non-ruin $\phi$ will be referred to as {\it probability of default}.

\section{Ruin probability in a general loan model}\label{sec2}
Earthquakes are considered rare, extreme events in most parts of the world, but could not be considered as such in Japan. 
The incidence of earthquakes has increased after the 2011 one. Looking at earthquake data, one can see that the arrivals of earthquakes can be described by one Poisson distribution before 2011, and a different parameter Poisson distribution after 2011. 
%
However, being still perceived as an extreme, rare event,  in addition to cultural reasons, at the moment only a minor part of home-owners in Japan have the relevant insurance cover. 
In order to evaluate the risks associated with a marketize earthquake insurance, we calculate the probability of default
of a lender, based on a model that can be seen as an {\it exponentiation of the Cram\'er-Lundberg model}, which is the classical model in collective non-life insurance (see e.g.  \cite{Dassios}, \cite{Miko},  \cite{Asmussen}).

Let  $W_i=T_i-T_{i-1}$, $ i=1,\ldots,$ $T_0=0.$  Firstly,
assume that
$ (W_i, X_i) $, $ i=1,\ldots,$ are 
independent and identically distributed with 
$(W,X)$.
For 
a bounded measurable function
$ h $, 
we set 
\begin{equation} \label{eq:k}
\begin{split}
\mathcal{K} h (u,t) := 
\mathbf{E}[I_{\{W <t\wedge \frac{u}{c}\}}h ( e^{X} (u-cW ),t-W)
]
.
\end{split}
\end{equation}
Then clearly
$ \mathcal{K} $
defines a linear transformation 
on $ L^\infty (\mathbf{R}_+^2 ) $.
\begin{theorem}\label{thm1}
The finite-time probability 
$ \phi $ satisfies the Fredholm type equation 
\begin{equation}\label{SPE-psi}
\begin{split}
\phi(u, t)= 
\phi_0(u, t)+  \mathcal{K} \phi (u, t), 
\end{split}
\end{equation}
where 
$
\phi_0(u,t )= \mathbf{P}( W >t )I_{\{t\leq \frac{u}{c}\}},
$ with $(u,t)\in (0,\infty)^2.$
\end{theorem}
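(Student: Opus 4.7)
The approach is to condition on the first arrival time $W_1$ and exploit a regenerative structure of the cash-flow process at that point. The first step, which is the key technical input, is to derive the pathwise identity
\begin{equation*}
U_{W_1 + s} \;=\; cW_1 + e^{-X_1}\,\tilde U_s, \qquad s\geq 0,
\end{equation*}
starting either from \eqref{eq:model} or from the equivalent integral representation $U_t = c\int_0^t e^{R_s}\,ds$. Here $\tilde U$ denotes the process built from the shifted marked sequence $(W_{i+1}, X_{i+1})_{i\geq 1}$, which, by the i.i.d.\ assumption on $(W_i, X_i)$, has the same law as $U$ and is independent of $(W_1, X_1)$.

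Since the paths of $U$ are continuous and strictly increasing, the event $\{\tau_u \geq t\}$ coincides with $\{U_t \leq u\}$, and I would then split this event according to whether $W_1 > t$ or $W_1 \leq t$. On the first piece no disaster has occurred by time $t$, so $U_t = ct$ and $\{U_t \leq u\}$ collapses to the deterministic condition $\{t \leq u/c\}$; since $W_1$ has the law of $W$, this contribution equals $\mathbf{P}(W > t)\,I_{\{t\leq u/c\}} = \phi_0(u,t)$.

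On $\{W_1 \leq t\}$, the regenerative identity converts $\{U_t \leq u\}$ into $\{\tilde U_{t-W_1} \leq e^{X_1}(u - cW_1)\}$, which is vacuous on $\{W_1 \geq u/c\}$ because $\tilde U \geq 0$. Conditioning on $(W_1, X_1)$ and using that $\tilde U$ is independent of $(W_1, X_1)$ and distributed as $U$, the inner probability is exactly $\phi\bigl(e^{X_1}(u - cW_1),\, t - W_1\bigr)$ by the definition of $\phi$. Taking expectation with respect to $(W_1, X_1) \stackrel{\mathrm{d}}{=} (W, X)$ produces precisely $\mathcal{K}\phi(u,t)$, and summing the two contributions yields \eqref{SPE-psi}.

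The main obstacle is not the probabilistic content of the argument but the bookkeeping needed to establish the regenerative identity and the accompanying independence rigorously from the pathwise formula \eqref{eq:model}; once that is in place the remainder reduces to a clean case split combined with the tower property. Boundary configurations such as $\{W_1 = u/c\}$ or $\{t = u/c\}$ are null events under any continuous law for $W$ and can be absorbed into either side without affecting the final equation, which accounts for the strict inequality $\{W < t \wedge u/c\}$ appearing in the definition \eqref{eq:k} of $\mathcal{K}$.
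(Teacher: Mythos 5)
Your proposal is correct and follows essentially the same route as the paper's proof: condition on the first arrival, use the regenerative identity $U_t = cT_1 + e^{-X_1}U'_{t-T_1}$ with $U'$ independent of $(T_1,X_1)$ and equal in law to $U$, and identify the two pieces of the split $\{W_1>t\}$ versus $\{W_1\le t\}$ with $\phi_0$ and $\mathcal{K}\phi$ respectively. The only cosmetic difference is your handling of the boundary events $\{W_1=t\}$, $\{W_1=u/c\}$ via a continuity assumption on the law of $W$; note that on $\{T_1<t\}$ the event $\{cT_1+e^{-X_1}U'_{t-T_1}\le u\}$ already forces $T_1<u/c$ since $U'_{t-T_1}>0$, so the strict inequality in $\mathcal{K}$ needs no density assumption, matching the paper's level of rigor.
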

\begin{proof}
Since $U$ is an increasing process, 
we have that 
$
\phi (u,t) 
= \mathbf{P}(  U_t \leq u )
. 
$
Since
\begin{equation*}\label{eq-1226-0}
\phi (u,t) 
=\mathbf{P}(U_t \leq u,\ T_1 \geq t)
+\mathbf{P}(U_t \leq u,\ T_1 <  t),
\end{equation*}
and since for $T_1 \geq t$, 
 $U_t = ct$, then
\begin{equation}\label{eq-1226-1}
\phi (u,t) 
=\phi_0 (u,t) 
+\mathbf{P}(U_t \leq u,\ T_1 <  t).
\end{equation}
When $T_1<t$, we have that 
\begin{equation*}
U_{t}= 
cT_1 + 
c e^{-X_1}\sum_{n=1}^\infty 1_{ [T_n , T_{n+1}) } (t) 
\{ (t-T_n) e^{-\sum_{i=2}^n X_i}
+ \sum_{i=2}^{n} (T_i -T_{i-1})e^{-\sum_{k=2}^{i-1}X_k} \}.
\end{equation*}
Noting that $(T_i, X_i)$ is a marked point process, 
\begin{equation*}
U_{t-T_1}'= c \sum_{n=1}^\infty 1_{ [T_n , T_{n+1}) } (t) 
\{ (t-T_n) e^{-\sum_{i=2}^n X_i}
+ \sum_{i=2}^{n} (T_i -T_{i-1})e^{-\sum_{k=2}^{i-1}X_k} \}
\end{equation*}
has the same distribution as $U_{t-T_1}$, and it's independent from $T_1$ and $X_1$. Consequently we have that 
\begin{equation}\label{eq-1226-2}
\begin{split}
\mathbf{P}(U_t \leq u,\ T_1 <  t) 
&= 
\mathbf{P}(cT_1 + 
 e^{-X_1}U_{t-T_1}' \leq u,\ T_1 < t) 
\\&= \mathcal{K} \phi (u,t).  
\end{split}
\end{equation}
Combining 
\eqref{eq-1226-1} with \eqref{eq-1226-2}, 
we obtain \eqref{eq-1226-0}. 
\end{proof}
\begin{corollary}
The finite-time probability of full loan repayment is expressed by a Fredholm type equation
\begin{equation}\label{eq:non-ruin}
\psi(u,t)
= \psi_0(u,t) + \mathcal{K} \psi(u,t), 
\end{equation}
where 
$\psi_0(u,t) := \mathbf{P} (W \geq \frac{u}{c}) I_{\{t > \frac{u}{c}\}},$  with  $(u,t)\in (0,\infty)^2.$
\end{corollary}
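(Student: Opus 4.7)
The plan is to deduce the corollary directly from Theorem \ref{thm1} by combining the identity $\psi = 1 - \phi$ with the linearity of the integral operator $\mathcal{K}$ on $L^\infty(\mathbf{R}_+^2)$. Substituting $\phi = 1 - \psi$ into \eqref{SPE-psi} and rearranging would give
\begin{equation*}
\psi(u,t) \;=\; \bigl(1 - \phi_0(u,t) - \mathcal{K}(\mathbf{1})(u,t)\bigr) + \mathcal{K}\psi(u,t),
\end{equation*}
where $\mathbf{1}$ denotes the constant function equal to $1$. Consequently, the whole argument reduces to verifying the pointwise identity $\psi_0(u,t) = 1 - \phi_0(u,t) - \mathcal{K}(\mathbf{1})(u,t)$.

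The next step is to unpack the right-hand side. From \eqref{eq:k} one reads off $\mathcal{K}(\mathbf{1})(u,t) = \mathbf{P}(W < t \wedge u/c)$, and I would then split the $(u,t)$-plane into the two regions $\{t > u/c\}$ and $\{t \leq u/c\}$. On the first region the indicator in $\phi_0$ vanishes and $\mathcal{K}(\mathbf{1}) = \mathbf{P}(W < u/c)$, so the right-hand side collapses to $\mathbf{P}(W \geq u/c) = \psi_0(u,t)$; on the second region both $\psi_0$ and $1 - \mathbf{P}(W > t) - \mathbf{P}(W < t)$ should vanish, which works cleanly under the standard implicit assumption that $W$ has no atom.

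The only genuine obstacle, albeit minor, is the boundary-atom issue at $t = u/c$ and elsewhere. If I wanted to avoid assuming that $W$ is atomless, I would instead mimic the proof of Theorem \ref{thm1} directly, using that $U$ is continuous and strictly increasing so that $\psi(u,t) = \mathbf{P}(U_t > u)$. Decomposing as
\begin{equation*}
\psi(u,t) \;=\; \mathbf{P}(U_t > u,\ T_1 \geq u/c) + \mathbf{P}(U_t > u,\ T_1 < u/c),
\end{equation*}
the first term equals $\psi_0(u,t)$ because, when the first disaster is delayed past $u/c$, the unperturbed ramp $cs$ crosses $u$ exactly at $s = u/c$, which lies in $(0,t)$ iff $t > u/c$; the second term rewrites as $\mathcal{K}\psi(u,t)$ by the same renewal-and-independence argument already used in Theorem \ref{thm1}, since on $\{T_1 < u/c \wedge t\}$ we have $\{U_t > u\} = \{U'_{t-T_1} > e^{X_1}(u-cT_1)\}$ with $U'$ an independent copy of $U$. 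This direct route sidesteps the atom question entirely and makes the probabilistic meaning of $\psi_0$ transparent.
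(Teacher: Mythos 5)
Your primary route is exactly the paper's proof, which literally reads ``By Theorem \ref{thm1}, for $\psi = 1-\phi$''; your verification of the pointwise identity $\psi_0 = 1 - \phi_0 - \mathcal{K}(\mathbf{1})$ just makes explicit what the paper leaves tacit, and the boundary-atom caveat you raise is covered by the paper's standing assumption (used from Section \ref{sec2-5-2} on) that $(W,X)$ has a joint density. Your alternative direct decomposition is correct as well, but it is a bonus rather than a needed repair.
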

\begin{proof}
By theorem \ref{thm1}, for $\psi=1-\phi.$
\end{proof}
Let $\mathcal{A}_p$ be the (completion of the) set of all functions $h$ on $\mathbf{R}_+^2$ 
such that 
\begin{equation*}
||h||_{\mathcal{A}_p} := \sup_{t>0} \int_0^{\infty} |h(u,t)|^p\ du < \infty. 
\end{equation*}

\begin{lemma}\label{Lemma-L1}
Suppose that $ X $ is non-trivial.
Then $ \mathcal{K} $ defined by \eqref{eq:k} is
a linear operator on 
$\mathcal{A}_p$. Moreover, the operator norm 
$\displaystyle ||\mathcal{K}||_{\mathcal{L}(\mathcal{A}_p)} 
= \sup\{||\mathcal{K}h||_{\mathcal{A}_p} :\ ||h||_{\mathcal{A}_p} =1,\ h \in \mathcal{A}_p \} $ 
is strictly less than $1$,
for any $ 1 \leq p < \infty$. 
\end{lemma}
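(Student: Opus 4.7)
The strategy is to establish the pointwise bound
\[
|\mathcal{K}h(u,t)|^p \;\leq\; \mathbf{E}\!\left[I_{\{W<t\wedge u/c\}}\,\bigl|h(e^X(u-cW),\,t-W)\bigr|^p\right],
\]
integrate in $u$, apply Fubini, change variables inside the expectation, and recognize the resulting inner integral as bounded by $\|h\|_{\mathcal{A}_p}$. This will produce $\|\mathcal{K}h\|_{\mathcal{A}_p}\leq \mathbf{E}[e^{-X}]\,\|h\|_{\mathcal{A}_p}$, and the conclusion will follow from $\mathbf{E}[e^{-X}]<1$.

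\textbf{Execution.} First I would observe that $\mathcal{K}h$ is well-defined for $h\in\mathcal{A}_p$ because the integrand is integrable once Fubini is applied. The pointwise bound above comes from combining the elementary inequality $|\mathbf{E}[Y]|\leq\mathbf{E}[|Y|]$ with H\"older's inequality applied to the indicator $I_{\{W<t\wedge u/c\}}$ (so that $\mathbf{E}[I_A]^{p/q}\leq 1$ drops out); for $p=1$ no H\"older step is needed. Next, integrating over $u\in(0,\infty)$ and applying Fubini,
\[
\int_0^\infty |\mathcal{K}h(u,t)|^p\,du \;\leq\; \mathbf{E}\!\left[I_{\{W<t\}}\int_{cW}^\infty \bigl|h(e^X(u-cW),\,t-W)\bigr|^p du\right],
\]
after using that $W<u/c$ is equivalent to $u>cW$. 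Then, for each realization of $(W,X)$, the substitution $v=e^X(u-cW)$ with $du=e^{-X}dv$ converts the inner integral into
\[
e^{-X}\int_0^\infty |h(v,\,t-W)|^p\,dv \;\leq\; e^{-X}\,\|h\|_{\mathcal{A}_p},
\]
since $t-W>0$ on the event $\{W<t\}$. Pulling the $\mathcal{A}_p$-norm outside the expectation and taking $\sup_{t>0}$ yields
\[
\|\mathcal{K}h\|_{\mathcal{A}_p} \;\leq\; \mathbf{E}[e^{-X}]\;\|h\|_{\mathcal{A}_p}.
\]

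\textbf{Strict inequality.} Finally, since the model imposes $X>0$ (with $X$ non-trivial), we have $e^{-X}<1$ on a set of positive probability and $e^{-X}\leq 1$ always, hence $\mathbf{E}[e^{-X}]<1$. This gives the operator bound $\|\mathcal{K}\|_{\mathcal{L}(\mathcal{A}_p)}\leq \mathbf{E}[e^{-X}]<1$ uniformly in $p\in[1,\infty)$.

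\textbf{Anticipated difficulty.} The computation is essentially mechanical; the only point requiring care is the pathwise change of variables, where one must keep the randomness of $X$ (and $W$) frozen while performing the $u$-substitution. Invoking Fubini before the substitution justifies this: the inner integral is a deterministic Lebesgue integral for each $\omega$, and the Jacobian $e^{-X(\omega)}$ is exactly what produces the contraction factor. No additional independence hypothesis on $(W,X)$ is used beyond what is already assumed for $\mathcal{K}$ to be well-defined.
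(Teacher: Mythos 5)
Your proof is correct and follows essentially the same route as the paper's: Jensen/H\"older to pull the $p$-th power inside the expectation, Tonelli, the change of variables $v=e^{X}(u-cW)$ producing the Jacobian $e^{-X}$, and the non-triviality of $X>0$ giving $\mathbf{E}[e^{-X}]<1$. The only (harmless) difference is that you fix $t$ and take the supremum at the end, whereas the paper moves $\sup_{t>0}$ inside the expectation before estimating.
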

\begin{proof}
For $h \in \mathcal{A}_p$, 
\if2 
we see that 
\begin{equation*}
\begin{split}
||\mathcal{K}h||_{\mathcal{A}_p} 
= 
\sup_{t>0} \int_0^{\infty} |\mathbf{E}[I_{\{W <t \wedge \frac{u}{c}\}}
h( e^{X} (u-cW ),t-W)]|^p\ du.
\end{split}
\end{equation*}
\fi 
by using Jensen's inequality and Minkowski's inequalities, we have that 
\begin{equation*}
\begin{split}
||\mathcal{K}h||_{\mathcal{A}_p}  
&\leq \mathbf{E}[\sup_{t>0} \int_0^{\infty} |I_{\{W <t \wedge \frac{u}{c}\}}
h( e^{X} (u-cW ),t-W)|^p\ du]
\\& 
\leq   \mathbf{E}[
e^{-X}\sup_{t>0}\int_{0}^{\infty} |h( Z,t)|^p\ dZ]
=
||h ||_{\mathcal{A}_p}
\mathbf{E}[
e^{-X}].
\end{split}
\end{equation*} 
Since $X$ is non-trivial, we completed the proof. 
\end{proof}

\begin{lemma}\label{Lemma-L2}
Suppose that the first moment of $ W$ 
is finite. Then $\psi_0 \in \cap_{p \geq 1} \mathcal{A}_p 
$.
\end{lemma}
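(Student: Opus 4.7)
The plan is straightforward since $\psi_0$ is explicitly the product of a tail probability and an indicator, hence bounded by $1$. I would exploit this boundedness to collapse all $L^p$ norms onto a single $L^1$ estimate, which in turn reduces to the finiteness of $\mathbf{E}[W]$ via the tail formula.

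Concretely, first I would observe that for every $(u,t) \in (0,\infty)^2$,
\begin{equation*}
0 \leq \psi_0(u,t) = \mathbf{P}(W \geq u/c)\, I_{\{t > u/c\}} \leq \mathbf{P}(W \geq u/c) \leq 1.
\end{equation*}
Consequently, for any $p \geq 1$, $|\psi_0(u,t)|^p \leq \psi_0(u,t) \leq \mathbf{P}(W \geq u/c)$, uniformly in $t$.

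Next I would take the supremum over $t$ inside the integral (which costs nothing, because the bound on the right does not depend on $t$) and invoke the standard identity
\begin{equation*}
\int_0^{\infty} \mathbf{P}(W \geq u/c)\, du = c\,\mathbf{E}[W],
\end{equation*}
obtained by a change of variables and Fubini (or equivalently the tail formula for non-negative random variables). Chaining these estimates yields
\begin{equation*}
\|\psi_0\|_{\mathcal{A}_p} = \sup_{t>0} \int_0^{\infty} |\psi_0(u,t)|^p\, du \leq c\, \mathbf{E}[W] < \infty,
\end{equation*}
by the hypothesis that $\mathbf{E}[W]$ is finite. Since the bound is uniform in $p \geq 1$, we conclude $\psi_0 \in \cap_{p \geq 1} \mathcal{A}_p$.

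There is no real obstacle here; the lemma is essentially a bookkeeping statement that converts the hypothesis $\mathbf{E}[W] < \infty$ into membership in the functional spaces $\mathcal{A}_p$. The only mild subtlety is recognising that the $L^p$ norm can be controlled by the $L^1$ norm because $\psi_0$ takes values in $[0,1]$, so raising to the $p$-th power only makes the integrand smaller.
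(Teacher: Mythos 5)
Your proof is correct and follows essentially the same route as the paper: bound $\psi_0^p$ by the tail probability using $\psi_0 \in [0,1]$, eliminate the $t$-dependence, and apply the tail formula $\int_0^\infty \mathbf{P}(W \geq u/c)\,du = c\,\mathbf{E}[W]$. The only cosmetic difference is that the paper computes the supremum over $t$ exactly (as the integral over all $u>0$, by monotonicity in $t$) before comparing the $p$-th power to the first power, whereas you drop the indicator up front; the substance is identical.
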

\begin{proof}

Let $p\geq 1$. We note that 
\begin{equation*}
\begin{split}
||\psi_0||_{\mathcal{A}^p}
= \sup_{t >0} 
\int_0^{\infty}\psi_0(u,t)^p \, du  \, dt 
= \sup_{t >0}
\int_0^{ct}
\left(\mathbf{P} (W \geq \frac{u}{c})\right)^p \, du. 
\end{split}
\end{equation*}
Having an increasing function,
\begin{equation*}
\begin{split}
\sup_{t >0}
\int_0^{ct}
\left(
\mathbf{P} (W \geq \frac{u}{c} )\right)^p \, du
=
\int_0^{\infty}
\left(\mathbf{P} (W \geq \frac{u}{c})\right)^p \, du
\leq
\int_0^{\infty}
\mathbf{P} (W \geq \frac{u}{c}) \, du
=c\mathbf{E}[W],
\end{split}
\end{equation*}
with the inequality given by the fact that the probability ranges in [0,1].
Hence we conclude $\psi_0 \in \mathcal{A}_p,$
by the boundedness of the first moment of $W$. 
\end{proof}

\begin{theorem}\label{theorem-25}
Under the assumptions of Lemmas \ref{Lemma-L1} and \ref{Lemma-L2}, 
we have that 
\begin{equation*}
\psi = \sum_{n=0}^{\infty} 
\mathcal{K}^n \psi_0 
\in L( \cap_{p \geq 1}\mathcal{A}_p),\ 
\end{equation*}
and moreover
$\phi = 1- \sum_{n=0}^{\infty} 
\mathcal{K}^n \psi_0.$
\end{theorem}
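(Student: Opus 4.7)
The plan is to run a Neumann series argument for the linear operator $\mathcal{K}$ on each Banach space $\mathcal{A}_p$, and then reconcile the resulting series with the probabilistic object $\psi$.

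First, I would invoke Lemma \ref{Lemma-L1}: since $\|\mathcal{K}\|_{\mathcal{L}(\mathcal{A}_p)} < 1$, the geometric series $\sum_{n=0}^{\infty}\mathcal{K}^n$ converges in operator norm on $\mathcal{A}_p$ and realises the resolvent $(I-\mathcal{K})^{-1}$. Combined with Lemma \ref{Lemma-L2}, which places $\psi_0$ in every $\mathcal{A}_p$, this makes
\[
\widetilde{\psi} \;:=\; \sum_{n=0}^{\infty} \mathcal{K}^n \psi_0
\]
a well-defined element of $\bigcap_{p\ge 1}\mathcal{A}_p$ satisfying $\widetilde{\psi} = \psi_0 + \mathcal{K}\widetilde{\psi}$.

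Second, I would use the Corollary (equation \eqref{eq:non-ruin}) to write $\psi = \psi_0 + \mathcal{K}\psi$ and iterate this identity $N$ times, obtaining
\[
\psi \;=\; \sum_{n=0}^{N-1} \mathcal{K}^n \psi_0 \;+\; \mathcal{K}^N \psi.
\]
The partial sums converge to $\widetilde\psi$ pointwise (and in $\mathcal{A}_p$) by the previous step, so the remaining task is to show that $\mathcal{K}^N\psi(u,t)\to 0$ as $N\to\infty$ for each fixed $(u,t)$. Once this is in hand, the identity $\psi = \widetilde\psi = \sum_{n\ge 0} \mathcal{K}^n\psi_0$ follows, and $\phi = 1 - \psi$ gives the second assertion.

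The hard part is precisely this pointwise vanishing of $\mathcal{K}^N\psi$, since $\psi$ itself need not lie in any $\mathcal{A}_p$ (it is bounded but the $\mathcal{A}_p$ norm involves a spatial integral), so the operator norm estimate of Lemma \ref{Lemma-L1} cannot be applied directly to $\psi$. The way around this is to use $\psi \le 1$ pointwise and bound $\mathcal{K}^N\psi \le \mathcal{K}^N\mathbf{1}$; a direct unfolding of the iterated expectation, mirroring the renewal argument in the proof of Theorem \ref{thm1}, yields the probabilistic identification
\[
\mathcal{K}^N\mathbf{1}(u,t) \;=\; \mathbf{P}\bigl(U_{T_N}\le u,\ T_N < t\bigr) \;\le\; \mathbf{P}(T_N < t).
\]
Since $W$ is non-trivial with finite positive mean (Lemma \ref{Lemma-L2}), the strong law gives $T_N = \sum_{i=1}^N W_i \to \infty$ almost surely, so $\mathbf{P}(T_N<t) \to 0$ for every fixed $t>0$. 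This closes the gap and completes the proof.
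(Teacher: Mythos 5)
Your proof is correct, and it takes a more careful route than the paper's own argument, which consists of a single sentence: by Lemma \ref{Lemma-L1} the operator norm of $\mathcal{K}$ is strictly less than one, so the Neumann series $\sum_{n\ge 0}\mathcal{K}^n$ converges in $L(\cap_{p\ge 1}\mathcal{A}_p)$ and realises the inverse of $I-\mathcal{K}$; the identification of the probabilistic $\psi$ with $(I-\mathcal{K})^{-1}\psi_0$ is left implicit. That step is not entirely free: to apply the inverse operator to the equation $\psi=\psi_0+\mathcal{K}\psi$ one must know that $\psi$ itself lies in $\mathcal{A}_p$ (it is bounded, but the $\mathcal{A}_p$-norm involves a spatial integral), or else control the remainder in the iteration --- exactly the point you isolate as ``the hard part''. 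Your patch is sound: positivity of $\mathcal{K}$ gives $\mathcal{K}^N\psi\le\mathcal{K}^N\mathbf{1}$, the unfolding $\mathcal{K}^N\mathbf{1}(u,t)=\mathbf{P}(U_{T_N}\le u,\ T_N<t)\le\mathbf{P}(T_N<t)$ is a legitimate repetition of the renewal argument of Theorem \ref{thm1}, and $\mathbf{P}(T_N<t)\to 0$ for fixed $t$. So your argument both recovers the theorem and fills a gap the paper glosses over; an alternative patch closer to the paper's spirit would be to verify $\psi\in\cap_{p\ge1}\mathcal{A}_p$ directly, e.g.\ via $\psi(u,t)\le\mathbf{P}(U_\infty\ge u)$ and $\int_0^\infty\mathbf{P}(U_\infty\ge u)\,du=\mathbf{E}[U_\infty]=c\,\mathbf{E}[W]/(1-\mathbf{E}[e^{-X}])<\infty$, after which the resolvent applies verbatim. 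One small attribution slip: $T_N\to\infty$ a.s.\ does not follow from Lemma \ref{Lemma-L2}, which only asserts $\mathbf{E}[W]<\infty$, not $\mathbf{E}[W]>0$; you need $\mathbf{P}(W>0)>0$, which holds because the $W_i$ are inter-arrival times of a point process (and is assumed explicitly in section 3), but is not part of the cited lemma and should be stated as a hypothesis or justified from the model.
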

\begin{proof}
Since  
the operator norm 
of $ \mathcal{K} $ is strictly less than $ 1, $ by Lemma \ref{Lemma-L1}, 
the Neumann series
$ \sum_{n=0}^\infty 
\mathcal{K}^n $
is convergent in $ L (\cap_{p \geq 1}\mathcal{A}_p) $
and thus defines (describes)
the inverse operator
of $ 1 - \mathcal{K}$.
\end{proof}

\begin{remark}
All the results in this section are valid for possibly negative $ X $, as long as $ \mathbf{E} [e^{-X}] < 1 $.
Although such situations are not realistic in our mortgage loan modelling, it  might be applicable to different contexts.
Furthermore, since the results cover the exponential functionals of compound Poisson processes, this may contribute to the literature on exponential functionals of L\'evy processes 
(see e.g. \cite{Bertoin-Yor}).
\end{remark}

\section{From finite-time to infinite-time ruin probability}\label{sec2-5-2}
In this section we will work on infinite horizon ruin probabilities as limit of finite-time ruin probabilities when 
the time goes to infinity. By continuity of probability measure, it holds that 
\begin{equation*}
\psi^{\infty}(u) = \lim_{t \rightarrow \infty} \psi (u,t) = \mathbf{P}(\tau_u < \infty), 
 u>0. 
\end{equation*} 
We assume that the assumptions of Lemmas \ref{Lemma-L1} and \ref{Lemma-L2} are satisfied, 
that is, $\mathbf{P} (X>0) >0$ and $\mathbf{E} [W] < \infty$. 
Moreover, we assume that $\mathbf{P} (W>0) >0$ and that the joint density function of $W$ and $X$ exists, and we denote it by $f_{W,X}(w,x)$.
Let  $L_p $ be the collection of functions which satisfy 
$\int_0^{\infty} |g(u) |^p \, du < \infty, $ 
for $g\in L_p$. 
By identifying functions $g \in L_p,  $  via $g \in \mathcal{A}_p$ with $g  (u,t ) =g(u ) \ (u,t >0)$, 
$L_p$ is a subset of $\mathcal{A}_p$.  
We define an operator $\mathcal{K}_{\infty}$ on $ L_p$ by  
\begin{equation*}\label{def-op-K+}
\begin{split}
\mathcal{K}_\infty g (u) := 
\mathbf{E}\left[1_{\{cW  < u\}}
g (e^{X} (u -cW) ) 
\right],  \quad u>0
.
\end{split}
\end{equation*}
Clearly,  
   $\mathcal{K}_{\infty}  g (u) 
    = \lim_{t \to \infty } 
    (\mathcal{K} g) (u,t),$
%
and therefore, by Lemma \ref{Lemma-L1}, the operator norm of $\mathcal{K}_{\infty}$ is strictly less than one, and it 
leads to
\begin{equation}\label{psi_inf_dec}
\psi^{\infty}  = \sum_{n=0}^{\infty} 
\mathcal{K}^n_{\infty} \psi_0^{\infty} 
\in  \cap_{p  \geq 1}L_p,\ 
\end{equation}
where $\psi_0^{\infty} (u) := \mathbf{P} (W> \frac{u}{c})$. 
We note that for any $t>\frac{u}{c}$, $\psi_0(u,t)$ is equal to $\psi_0^{\infty} (u)$. 

\subsection{Convergence rate of ruin probability from finite-horizon to infinite-horizon}\label{sec2-5}
The finite-time horizon probability of default is extremely relevant for the mortgage markets that deals fixed-term loans. Having such a fast convergence to the infinite-time probability, we can actually use the infinite-time probability (which we can more often calculate explicitly) as an approximation for the finite-time one.
In this section,
we show that the finite-time ruin probability converges to the
infinite-horizon one exponentially  
fast. 
\begin{theorem}\label{Thm-convergence+}
We assume that there exists $\theta_0 >0$ such that the moment generating function 
$M_W(\theta)=\mathbf{E}[e^{\theta W} ] $ exists for any $ \theta < \theta_0 $. 
Then, we have the following estimates.
\begin{enumerate}
\item[(i)] 
There exists $\xi>0$ and $C > 0 $ such that for any $t >0$, 
\begin{equation}\label{thm-exp-eq2}
\begin{split}
&\int_0^\infty  |\psi^{\infty}(u) 
-\psi(u,t)| 
\, du  \leq C e^{-\xi t} 
.\end{split}
\end{equation}
\item[(ii)] For any $u>0$, there exists $C(u) > 0 $ such that 
for any $t>0$,
\begin{equation*}
0 \leq \psi^{\infty}(u) 
-\psi(u,t)\leq C(u) e^{-\xi' t} 
,
\end{equation*}
\end{enumerate} 
for some $\xi' >0$.
\end{theorem}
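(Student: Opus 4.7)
Set $d(u,t) := \psi^{\infty}(u) - \psi(u,t) \geq 0$. Subtracting the Fredholm equation \eqref{eq:non-ruin} from the infinite-horizon identity $\psi^{\infty} = \psi_0^{\infty} + \mathcal{K}_{\infty} \psi^{\infty}$, one obtains the perturbation equation
\[
d(u,t) = R(u,t) + \mathcal{K} d(u,t),
\]
where the residual
\[
R(u,t) := [\psi_0^{\infty}(u) - \psi_0(u,t)] + [\mathcal{K}_{\infty}\psi^{\infty}(u) - \mathcal{K}\psi^{\infty}(u,t)]
\]
is explicit and, by direct inspection, vanishes on $\{t \geq u/c\}$ (where the finite- and infinite-horizon equations agree), while on $\{t < u/c\}$ it satisfies the pointwise bound $|R(u,t)| \leq 2\,\mathbf{P}(W > u/c)$. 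By Markov's inequality and the MGF hypothesis, $\mathbf{P}(W > u/c) \leq M_W(\theta)\, e^{-\theta u/c}$ for any $\theta \in (0, \theta_0)$. Lemma \ref{Lemma-L1} guarantees that $I - \mathcal{K}$ is invertible, so $d = \sum_{n \geq 0} \mathcal{K}^n R$.

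For part (i), the plan is to iterate $\mathcal{K}$ in the $L^1$-in-$u$ sense. The key computation is the change of variables $v = e^X(u - cW)$ (Jacobian $e^X$) inside the expectation defining $\mathcal{K}$, giving
\[
\int_0^\infty |\mathcal{K} h(u,t)|\, du \leq \mathbf{E}\!\left[\,1_{\{W < t\}}\, e^{-X} \int_0^\infty |h(v, t - W)|\, dv\,\right].
\]
Integrating the pointwise bound on $R$ yields $\int_0^\infty |R(u,t)|\, du \leq \tfrac{2cM_W(\theta)}{\theta}\, e^{-\theta t}$, and iterating produces
\[
\int_0^\infty |\mathcal{K}^n R(u,t)|\, du \leq \tfrac{2 c M_W(\theta)}{\theta}\, q_\theta^{\,n}\, e^{-\theta t}, \qquad q_\theta := \mathbf{E}[e^{-X + \theta W}].
\]
Since $q_0 = \mathbf{E}[e^{-X}] < 1$ by Lemma \ref{Lemma-L1} and $\theta \mapsto q_\theta$ is continuous near $0$ (Cauchy--Schwarz combined with the MGF hypothesis), one chooses $\theta > 0$ small enough that $q_\theta < 1$ and sums the geometric series to obtain (i) with $\xi = \theta$.

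For part (ii), the strategy is to leverage the vanishing of $R$ on $\{t \geq u/c\}$ together with the $L^1$-in-$u$ decay of (i) to extract a pointwise $t$-decay. On $\{t > u/c\}$ one has $d(u,t) = \mathcal{K} d(u,t)$; writing this as an integral against the joint density $f_{W,X}$ and performing the change of variables $(w,x) \mapsto (v,s) = (e^x(u - cw), t - w)$, one rewrites
\[
\mathcal{K} d(u,t) = \int\!\!\int d(v, s)\, \rho(v, s;\, u, t)\, dv\, ds,
\]
for an explicit kernel $\rho$ involving $f_{W,X}$ and $1/v$. Bounding the inner $dv$-integral by the $L^1$ estimate from (i) and the outer $ds$-integral by the MGF of $W$ yields $d(u,t) \leq C(u)\, e^{-\xi' t}$ on $\{t > u/c\}$; on $\{t \leq u/c\}$ the trivial bound $d \leq 1 \leq e^{\xi'(u/c - t)}$ is absorbed into $C(u)$. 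The principal obstacle is controlling the kernel $\rho$ as $w \uparrow u/c$, where the Jacobian factor $1/v = [e^x(u - cw)]^{-1}$ becomes singular. To circumvent this, I would split the $w$-integration into a ``bulk'' part $\{w \leq u/(2c)\}$ on which the kernel is uniformly bounded (using local boundedness of $f_{W,X}$, which holds in all examples of section \ref{sec3}), and a ``boundary'' part $\{u/(2c) < w < u/c\}$ whose probability is at most $M_W(\theta)\, e^{-\theta u/(2c)}$---already exponentially small in $u$---and which can therefore be absorbed into $C(u)$ after one further application of the perturbation equation.
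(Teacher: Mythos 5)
Your overall architecture for (i) --- the perturbation identity $d=R+\mathcal{K}d$ with $d=\sum_n\mathcal{K}^nR$, the $L^1$-in-$u$ contraction of $\mathcal{K}$ with factor $\mathbf{E}[e^{-X+\theta W}]$ --- is sound and close in spirit to the paper's (which instead iterates the termwise differences $\mathcal{K}_\infty^n\psi_0^\infty-\mathcal{K}^n\psi_0$ via a recursion proved in the Appendix). But your key pointwise bound on the residual is false. Writing the second piece of $R$ explicitly,
\begin{equation*}
\mathcal{K}_\infty\psi^{\infty}(u)-\mathcal{K}\psi^{\infty}(u,t)
=\mathbf{E}\bigl[I_{\{t\wedge\frac{u}{c}\,\le\, W<\frac{u}{c}\}}\,\psi^{\infty}\bigl(e^{X}(u-cW)\bigr)\bigr],
\end{equation*}
this is controlled by $\mathbf{P}(W\ge t)$, not by $2\,\mathbf{P}(W>u/c)$: when $W$ is close to $u/c$ the argument $e^{X}(u-cW)$ is small and $\psi^{\infty}$ there is close to $1$, so the term is of the order of $\mathbf{P}(t\le W<u/c)$; for instance, for bounded $W$ and $u>c\,\mathrm{ess\,sup}\,W$ your bound is $0$ while $R$ is not. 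Consequently the step ``integrating the pointwise bound yields $\int_0^\infty|R|\,du\le \frac{2cM_W(\theta)}{\theta}e^{-\theta t}$'' does not go through as written. It is repairable: by the same change of variables $v=e^{X}(u-cW)$ one gets $\int_0^\infty|\mathcal{K}_\infty\psi^\infty(u)-\mathcal{K}\psi^\infty(u,t)|\,du\le \mathbf{E}[e^{-X}I_{\{W\ge t\}}]\,\|\psi^{\infty}\|_{L^1}\le M_W(\theta)e^{-\theta t}\|\psi^{\infty}\|_{L^1}$, using $\psi^{\infty}\in L^1$ from \eqref{psi_inf_dec}, and the first piece gives $c\,\mathbf{E}[(W-t)^+]$; so (i) survives with a different constant, but the needed ingredient ($L^1$-integrability of $\psi^{\infty}$, not a tail bound in $u$) is missing from your write-up.

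Part (ii) has a more serious gap. After splitting the kernel at $w=u/(2c)$, your ``boundary'' term is $\mathbf{E}\bigl[I_{\{u/(2c)<W<u/c\}}\,d\bigl(e^{X}(u-cW),\,t-W\bigr)\bigr]$, and the only bound you offer is its probability $\le M_W(\theta)e^{-\theta u/(2c)}$, which is small in $u$ but does not decay in $t$ for fixed $u$; it therefore cannot be ``absorbed into $C(u)$'', since the target is $C(u)e^{-\xi' t}$ with decay in $t$. The promised extra application of the perturbation equation is not carried out, and it is not evident how it would produce $t$-decay on this event without reintroducing the same singular region. You also quietly assume local boundedness of $f_{W,X}$, which is not among the hypotheses of the theorem. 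Note that the paper deduces (ii) from (i) without any of this: by Chebyshev's inequality the Lebesgue measure of $A_n=\{u:\ \psi^{\infty}(u)-\psi(u,n)\ge Ce^{-\xi' n}\}$ is at most $e^{-(\xi-\xi')n}$, Borel--Cantelli gives $\mathrm{Leb}(\limsup_n A_n)=0$, and monotonicity in $t$ together with continuity of $\psi^{\infty}(u)$ and $\psi(u,t)$ in $u$ upgrades the almost-everywhere bound to every $u$; adopting that route (or supplying a genuine $t$-decaying bound for the boundary term) is needed to close your argument.
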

\begin{proof}
We first show that 
(i) implies (ii). 
By (i), we can choose
$\xi >0 $ and $C>0$ such that 
\eqref{thm-exp-eq2} holds true. 
Let $\xi' \in ( 0,\xi )  $.   
For each $ n \in \mathbf{N
}$, set 
\begin{equation*}
A_n := \{u >0 ;\ \psi^{\infty}(u) 
-\psi(u,n)\geq C e^{- \xi'  n} \}. 
\end{equation*} 
Then 
{the Lebesgue measure of the event $A_n$ is bounded} {as}
\begin{equation*}
\begin{split}
\mbox{Leb}( A_n)  
 \leq \frac{1}{C e^{- \xi'n}} \int_0^{\infty }|\psi^{\infty}(u) -\psi(u,t) |   \, du \leq \frac{Ce^{- \xi n}}{ C e^{- \xi'  n}} 
=e^{-(\xi-\xi') n}
. \end{split}
\end{equation*}
Therefore we have that 
$\sum_{n=0}^{\infty} \mbox{Leb}( A_n)  
< \infty.
$
Hence, by Borel-Cantelli lemma, 
$\mbox{Leb} (\limsup_{n \rightarrow \infty } A_n)  =  0, $
which leads to 
\begin{equation*}
\psi^{\infty}(u) 
-\psi(u,t) < C(u) e^{- \xi'  t},\ \mbox{Leb-a.s. }u, 
\end{equation*}
for some $C(u)>0$. 
Since $\psi^{\infty}(u)$ and  $\psi(u,t)$ are continuous in $u$, it holds for any $ u $. 

Next we prove (i). 
Since 
\begin{equation*}
\psi^{\infty}(u) 
-\psi(u,t)=
\sum_{n=0}^{\infty}
(\mathcal{K}_{\infty}^n\psi_{0}^{\infty}(u) 
-\mathcal{K}^n\psi_0(u,t))
,
\end{equation*} 
by Theorem \ref{theorem-25} and \eqref{psi_inf_dec},
it suffices to show that 
there exist constants 
$ C'>0$, $ a \in (0,1) $, 
and $ \xi > 0 $ such that 
\begin{equation}\label{target}
    \int_0^\infty 
    | \mathcal{K}_{\infty}^n\psi_{0}^{\infty}(u) 
-\mathcal{K}^n\psi_0(u,t)| du 
\leq C' a^n e^{-\xi t}. 
\end{equation}

To establish \eqref{target}, 
we rely on the following recursive {relation}:
\begin{equation}\label{thm-l1-Kn-rec} 
\begin{split}
&\int_0^\infty  |
\mathcal{K}_{\infty}^n\psi_{0}^{\infty}(u) 
-\mathcal{K}^n\psi_0(u,t)| 
\, du 
\\& \leq
\mathbf{E}[e^{-X}]
\mathbf{E}[I_{\{W<t\}}\int_0^{\infty}
 | \mathcal{K}_{\infty}^{n-1}\psi_{0}^{\infty}(u)  - \mathcal{K}^{n-1}\psi_{0}  (u, t-W) |\, du]
\\& \qquad 
+ c 
\mathbf{E}[e^{-X}]^n \mathbf{P}(W > t) 
\mathbf{E}[ W ] 
. \end{split}
\end{equation}
A proof of \eqref{thm-l1-Kn-rec} will appear in Appendix.  

{Using \eqref{thm-l1-Kn-rec} recursively}, 
\begin{equation}\label{thm-n-eq5-26} 
\begin{split}
&\int_0^\infty  |
\mathcal{K}_{\infty}^n\psi_{0}^{\infty}(u) 
-\mathcal{K}^n\psi_0(u,t)| 
\, du 
\\& \leq 
\mathbf{E}[e^{-X}]
    \mathbf{E}[I_{\{W_1<t\}}  \\& \quad 
    \times\Big( \mathbf{E}[e^{-X}] 
\mathbf{E} [I_{\{W_2<t -W_1\}} 
\int_0^{\infty}
 | \mathcal{K}_{\infty}^{n-2}\psi_{0}^{\infty}(u)  - \mathcal{K}^{n-2}\psi_{0}  (u, t-W_1 -W_2 ) |\, du | W_1] ]
\\& \quad 
+ c
\mathbf{E}[ e^{-X}]^{n-1} \mathbf{E}[ W ] 
\mathbf{E}[I_{\{W_1<t\}}
 \mathbf{P}(W_2 > t -W_1 | W_1)  
] \Big) 
+c 
\mathbf{E}[e^{-X}]^n \mathbf{P}(W_1 > t) 
\mathbf{E}[ W ] 
\\& = 
\mathbf{E}[e^{-X}]^2  
\mathbf{E}[I_{\{S_2<t\}}
\int_0^{\infty}
 | \mathcal{K}_{\infty}^{n-2}\psi_{0}^{\infty}(u)  - \mathcal{K}^{n-2}\psi_{0}  (u, t-S_2 ) |\, du ]
\\& \qquad  
+c
\mathbf{E}[e^{-X}]^n \mathbf{P}(S_2 > t) 
\mathbf{E}[ W ] 
\\& \leq 
\mathbf{E}[e^{-X}]^n
\mathbf{E}[I_{\{S_n<t\}}
\int_0^{\infty}
 |\psi_{0}^{\infty}(u)  -\psi_{0}  (u, t-S_n ) |\, du ]
+c 
\mathbf{E}[e^{-X}]^n \mathbf{E}[ W ]
\mathbf{P}(S_n > t) 
 , \end{split}
\end{equation}
where $S_n= \sum_{k=1}^n W_k $. 
The $L^1$- norm of the difference between the finite and the infinite ruin probabilities 
in the right-most of \eqref{thm-n-eq5-26} is 
calculated as 
\begin{equation}\label{eq-L1h0-26}
\begin{split}
\int_0^{\infty}|\psi_{0}^{\infty}(u) - \psi_0(u,t) |\, du  
 = \int_{ct}^{\infty} \mathbf{P}(W >\frac{u}{c})  \, du  
 = c\mathbf{E}[I_{\{W >t\}}(W-t)],\
\end{split}
\end{equation}
for $t >0 $. 
Therefore by substituting \eqref{eq-L1h0-26} to \eqref{thm-n-eq5-26},  we have that 
\begin{equation}\label{thm-n-eq6-26} 
\begin{split}
&\mbox{(the right hand side of \eqref{thm-n-eq5-26} ) }
\\& = c \mathbf{E}[e^{-X}]^n
\mathbf{E}[I_{\{S_n<t\}}\mathbf{E}[I_{\{W_{n+1} > t- S_n\}}(W_{n+1} - (t- S_n) ) |\ S_n ]  ]
\\& \qquad  
+c 
\mathbf{E}[e^{-X}]^n \mathbf{E}[ W ]
\mathbf{P}(S_n > t) 
\\& = c 
\mathbf{E}[e^{-X}]^n
\mathbf{E}[I_{\{  S_{n+1} > t\}}(W_{n+1} - ((t- S_n)\vee 0)  )   ]
. \end{split}
\end{equation}
The claim \eqref{target} 
is fulfilled once 
we establish 
\begin{equation}\label{target2}
    \mathbf{E}[I_{\{  S_{n+1} > t\}}(W_{n+1} - ((t- S_n)\vee 0))] 
    \leq e^{-\xi t} C'' (a')^n
\end{equation}
with some $ \xi, C'' > 0 $
and $ a'< 1/\mathbf{E}[e^{-X}]$. 
To see this, we note that 
for any $\theta \in (0, \theta_0)  $,  
\begin{equation}\label{thm-n-eq7-26} 
\begin{split}
\mathbf{E}[I_{\{  S_{n+1} > t\}}(W_{n+1} - ((t- S_n)\vee 0)  )   ]
 \leq \mathbf{E}[I_{\{  S_{n+1} > t\}}W_{n+1}]
 \leq \mathbf{E}[W_{n+1}e^{\theta ( S_{n+1} -t)  }],
\end{split}
\end{equation}
allowing for infinite value in the rightmost.
Since $W_{n+1}$ and $S_n$ are independent, the right hand side of \eqref{thm-n-eq7-26} 
is decomposed as follows: 
\begin{equation*} 
\begin{split}
\mathbf{E}[W_{n+1}e^{\theta ( W_{n+1}  + S_{n} -t)  }]
 = e^{-\theta t}\mathbf{E}[W_{n+1}e^{\theta W_{n+1}}]\mathbf{E}[e^{\theta S_n}]
 = e^{-\theta t} 
\mathbf{E}[We^{\theta W}]
M_W^n (\theta) 
. \end{split}
\end{equation*}
Since $X$ is non-trivially positive, we can choose $\xi  \in (0, \theta_0) $ such that 
\begin{equation*}
M_W(\xi) :=
\mathbf{E} [e^{\xi W} ]  <  \frac{1}{\mathbf{E} [e^{-X} ]}.
\end{equation*} 
Thus, we established 
\eqref{target2},
which implies \eqref{target},
and hence the proof is complete. 
\end{proof}

\subsection{Exponential decay rate for exponential time arrival}

Firstly, we
consider that 
the arrivals of disasters $W$ are exponentially distributed, with parameter $\lambda$, {meaning the moment generating function exists for all $\theta < \lambda$, $M_W(\theta) = \frac{\lambda}{\lambda -\theta}$, so Theorem \ref{Thm-convergence+} applies, moreover the constants $c$ and $C$ of \eqref{thm-exp-eq2} can be determined explicitly. 

\begin{proposition}
For $W$ exponentially distributed with parameter $\lambda>0,$
$$\int_0^\infty  |\psi^{\infty}(u) 
-\psi(u,t)| \, du \leq \frac{1}{\alpha}  e^{- \lambda \alpha t}, \quad t\geq0, 
$$
where $\alpha = 1- \mathbf{E}[e^{-X} ].$
\end{proposition}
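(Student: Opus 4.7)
The plan is to start from the bound already proved in the course of Theorem~\ref{Thm-convergence+}. Specifically, summing \eqref{thm-n-eq6-26} over $n \geq 0$ gives
\begin{equation*}
\int_0^\infty |\psi^\infty(u) - \psi(u,t)|\,du
\leq c \sum_{n=0}^\infty \mathbf{E}[e^{-X}]^n\,
\mathbf{E}\bigl[I_{\{S_{n+1} > t\}}(W_{n+1} - ((t-S_n)\vee 0))\bigr],
\end{equation*}
so everything reduces to evaluating this series when $W \sim \mathrm{Exp}(\lambda)$.

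The key step is to exploit memorylessness. Splitting on $\{S_n \leq t\}$ versus $\{S_n > t\}$: on the first event, conditional on $S_n$, the random variable $W_{n+1} - (t - S_n)$ given $\{W_{n+1} > t - S_n\}$ is again $\mathrm{Exp}(\lambda)$, and on the second event, $(t-S_n)\vee 0 = 0$ while $W_{n+1}$ is independent of $S_n$. A short computation then yields
\begin{equation*}
\mathbf{E}\bigl[I_{\{S_{n+1} > t\}}(W_{n+1} - ((t-S_n)\vee 0))\bigr]
= \frac{1}{\lambda}\,\mathbf{P}(S_{n+1} > t),
\end{equation*}
since both pieces factor through the Poisson/Gamma identity $\mathbf{P}(S_{n+1} > t) = \mathbf{P}(N_t \leq n) = \sum_{k=0}^n e^{-\lambda t}(\lambda t)^k/k!$, where $N_t$ denotes the Poisson counting process associated with the exponential arrivals.

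The final step is a Fubini-style interchange of summation. Writing $\beta := \mathbf{E}[e^{-X}] = 1 - \alpha$,
\begin{equation*}
\sum_{n=0}^\infty \beta^n \sum_{k=0}^n \frac{e^{-\lambda t}(\lambda t)^k}{k!}
= \sum_{k=0}^\infty \frac{e^{-\lambda t}(\lambda t)^k}{k!}\sum_{n=k}^\infty \beta^n
= \frac{1}{1-\beta}\sum_{k=0}^\infty \frac{e^{-\lambda t}(\lambda \beta t)^k}{k!}
= \frac{1}{\alpha}\,e^{-\lambda\alpha t},
\end{equation*}
where the double series converges absolutely since $\beta < 1$ (which is also precisely the condition $\alpha > 0$ coming from the non-triviality of $X$). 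Multiplying by the prefactor $c/\lambda$ from the two previous displays yields the stated exponential decay rate $\lambda\alpha$ with the claimed constant (up to the normalization by $c$).

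I do not anticipate a substantive obstacle: the moment generating function assumption of Theorem~\ref{Thm-convergence+} is automatic in the exponential case (for $\theta < \lambda$), and everything else reduces to (a) memorylessness, (b) the explicit Gamma density of $S_n$, and (c) the Fubini exchange. The only place requiring care is handling the boundary $n = 0$ term, where $S_0 = 0$ and the expectation reduces to $\mathbf{E}[(W - t)^+] = e^{-\lambda t}/\lambda$, consistent with the general formula; this is essentially the $L^1$ identity \eqref{eq-L1h0-26} specialized to the exponential law.
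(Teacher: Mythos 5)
Your argument is essentially the paper's own proof: you sum the per-$n$ bound \eqref{thm-n-eq6-26} over $n\geq 0$, evaluate $\mathbf{E}[I_{\{S_{n+1}>t\}}(W_{n+1}-((t-S_n)\vee 0))]$ explicitly for exponential $W$ (your memoryless packaging as $\tfrac{1}{\lambda}\mathbf{P}(S_{n+1}>t)$ is the same computation the paper performs in \eqref{eq-ex1}--\eqref{eq-ex2}, just stated more compactly), and then interchange the double sum exactly as in the paper's final display. The only point worth flagging is that your bookkeeping of constants is the more careful one: carrying the prefactor gives $\tfrac{c}{\lambda\alpha}e^{-\lambda\alpha t}$, whereas the paper drops a factor $1/\lambda$ in \eqref{eq-ex1} and the factor $c$ from \eqref{thm-n-eq6-26} in \eqref{eq-ex3}, so the stated constant $\tfrac{1}{\alpha}$ holds only under the normalization $c=\lambda$ --- the discrepancy is $c/\lambda$, not merely $c$ as your closing remark suggests.
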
}
\begin{proof}
The partial sum $S_n = \sum_{k=1}^n W_k $ 
is Erlang distributed with parameters {$n$ and} $\lambda$, and its tail distribution function is given by 
\begin{equation*}\label{eq-tail-Sn}
 \mathbf{P}(S_n > z) 
= e^{- \lambda z } \sum_{k=0}^{n-1} \frac{(\lambda z)^k} {k! }. 
\end{equation*}
For $n \geq 0$, 
by tower property of expectations, conditioned by $S_n$, we have 
that 
\begin{equation}\label{eq-ex1} 
\begin{split}
\mathbf{E}[I_{\{  S_{n+1} > t\}}(W_{n+1} - ((t- S_n)\vee 0)  )   ]
&= \mathbf{E}[\int_{t - S_n}^{\infty}(w - ((t- S_n)\vee 0)  ) \lambda e^{-\lambda w} \, dw    ]
\\&= \mathbf{E}[I_{\{S_n < t\}}e^{ - \lambda (t- S_n)} ] + \mathbf{P}(S_n>t)
. \end{split}
\end{equation}
By \eqref{eq-tail-Sn}, the first term of \eqref{eq-ex1} is rewritten as 
\begin{equation}\label{eq-ex2} 
\begin{split}
 e^{ - \lambda t }\mathbf{E}[I_{\{S_n < t\}}e^{ \lambda S_n} ]
=e^{ - \lambda t } \frac{(\lambda t)^n} {n! }
. \end{split}
\end{equation}
By the combination of \eqref{thm-n-eq6-26}, \eqref{eq-ex1} and  \eqref{eq-ex2},   
 we obtain that 
\begin{equation}\label{eq-ex3} 
\begin{split}
&\int_0^{\infty} | \mathcal{K}_{\infty}^n\psi_{0}^{\infty}(u) 
-\mathcal{K}^n\psi_0(u,t) | \, du 
 \leq 
\mathbf{E}[e^{-X} ]^n  e^{- \lambda t } \sum_{k=0}^{n} \frac{(\lambda t)^k} {k! }, \quad {\forall n\geq 0}
. \end{split}
\end{equation}
Summing up \eqref{eq-ex3} for $n \geq 0$, 
we get the decay rate 
\begin{equation*}\label{eq-ex4} 
\begin{split}
&\int_0^\infty  |\psi^{\infty}(u) 
-\psi(u,t)| 
\, du   
\leq \sum_{n=0}^{\infty}
\mathbf{E}[e^{-X} ]^n  e^{- \lambda t } \sum_{k=0}^{n} \frac{(\lambda t)^k} {k! } 
=  \frac{\exp(- \lambda t(1- \mathbf{E}[e^{-X} ] ) )}{1 - \mathbf{E}[e^{-X} ] } 
. \end{split}
\end{equation*}
\end{proof}

\section{Explicit ruin probabilities in loan models}\label{sec3}

{We will set the distribution parameters for earthquake frequency and severity 
to values that minimize the probability of a cohort not repaying the loans, that we will call ruin/default. 
In real life, historical data should be used to determine these parameters and further set the insurance premium which minimizes this probability of ruin.}
We will
work on 
infinite horizon {default/full repayment} probabilities, 
for which 
the equation
\eqref{SPE-psi}
reduces to 
\begin{equation}\label{SPE-phi-infty}
\begin{split}
\phi^{\infty}(u)= 
\phi^{\infty}_0 (u) +  
\mathcal{K}_\infty \phi^{\infty} (u) ,
\end{split}
\end{equation}
with $\phi^{\infty}_0 (u)= \lim_{t \to \infty}\mathbf{P}( W >t )I_{\{t\leq \frac{u}{c}\}}=0$ and the equation 
for $\psi^{\infty} $ is 
\begin{equation}\label{SPE-psi-infty}
\begin{split}
\psi^{\infty} (u) = \psi^{\infty}_0 (u) + \mathcal{K}_\infty \psi^{\infty} (u),
\end{split}
\end{equation}
where $\psi^{\infty}_0 (u)=  \lim_{t \to \infty}\mathbf{P} (W \geq \frac{u}{c}) I_{\{t > \frac{u}{c}\}} = \mathbf{P}(W  \geq  \frac{u}{c})$. \\

In the following, we 
consider that the proportion $e^{-X}$ of borrowers left in the loan programme is described by an exponentially distributed random variable $X,$ with parameter $
\alpha$. {In the remainder, for ease of notation, we will use $\phi(u)$ for $\phi^{\infty}(u)$ and the equivalent for $\psi.$}

\color{black}

\subsection{Memory-less Arrivals}

We will start with Poisson arrivals, meaning that we wait an exponential amount of time between events. We also assume that the arrivals of disasters $W$ are independent of the effect on the bank's remaining proportion of borrowers $X$ .
\begin{proposition}\label{ex1}
If $X$ is exponentially distributed with parameter $\alpha >0$ and $W$ is exponentially distributed with parameter $\lambda >0$, and independent of $X$,
then the probability of default $\phi$ is given by 
\begin{equation}\label{prop4-phi}
\begin{split}
\phi(u) 
&=  \frac{1}{\Gamma (\alpha +1)} 
\int_{0 }^{\frac{\lambda u}{c}}  z ^{\alpha}\exp ( -z) dz, \quad u\geq0.
\end{split}
\end{equation}
\end{proposition}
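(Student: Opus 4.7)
The plan is to start from the infinite-horizon equation \eqref{SPE-phi-infty}. Since $\phi_0^\infty\equiv 0$, this reduces to the homogeneous fixed-point equation $\phi=\mathcal{K}_\infty\phi$, which under the assumed independence of $W\sim\mathrm{Exp}(\lambda)$ and $X\sim\mathrm{Exp}(\alpha)$ reads
\begin{equation*}
\phi(u)=\int_0^{u/c}\!\!\lambda e^{-\lambda w}\int_0^\infty \alpha e^{-\alpha x}\,\phi\!\left(e^x(u-cw)\right)dx\,dw.
\end{equation*}
My strategy is to turn this integral equation into an ordinary differential equation by performing two successive changes of variables and two differentiations.

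First I would handle the inner $X$-integral: substituting $y=e^x(u-cw)$ one has, writing $v=u-cw$,
\begin{equation*}
\int_0^\infty\alpha e^{-\alpha x}\phi(e^x v)\,dx=\alpha v^{\alpha}\int_v^\infty y^{-\alpha-1}\phi(y)\,dy.
\end{equation*}
Then substituting $v=u-cw$ in the outer integral and multiplying through by $e^{\lambda u/c}$ yields a form in which $u$ appears only as the upper limit of a single integral. Differentiating once in $u$ produces the first-order integro-differential equation
\begin{equation*}
c\phi'(u)+\lambda\phi(u)=\lambda\alpha u^{\alpha}\int_u^{\infty}y^{-\alpha-1}\phi(y)\,dy.
\end{equation*}

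Dividing by $u^\alpha$ and differentiating a second time eliminates the remaining integral and, after multiplying by $u^{\alpha+1}$ and simplifying, yields the second-order linear homogeneous ODE
\begin{equation*}
cu\,\phi''(u)+(\lambda u-\alpha c)\,\phi'(u)=0.
\end{equation*}
This is separable in $\phi'$: from $\phi''/\phi'=\alpha/u-\lambda/c$ I obtain $\phi'(u)=K u^{\alpha}e^{-\lambda u/c}$ for some constant $K>0$. Integrating from $0$ and substituting $z=\lambda s/c$ gives
\begin{equation*}
\phi(u)=K\bigl(c/\lambda\bigr)^{\alpha+1}\int_0^{\lambda u/c}z^{\alpha}e^{-z}\,dz+\phi(0).
\end{equation*}

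Two boundary conditions pin down the remaining constants: $\phi(0)=0$, since $\tau_0=0$ deterministically, and $\phi(u)\to 1$ as $u\to\infty$, which follows from $\psi^{\infty}\in L^1$ (by \eqref{psi_inf_dec}) together with the monotonicity of $\psi^{\infty}$. The second condition forces $K(c/\lambda)^{\alpha+1}\Gamma(\alpha+1)=1$, delivering the claimed identity \eqref{prop4-phi}. The main obstacle I anticipate is not the algebra but the rigorous justification of differentiating under the integral sign twice; this requires local boundedness and continuity of $\phi$ (and of the tail integral $\int_u^\infty y^{-\alpha-1}\phi(y)\,dy$), which I would derive from the Neumann-series representation of Theorem \ref{theorem-25} applied to $\psi^\infty=1-\phi$, bootstrapping smoothness from the smoothness of the kernel defined by the exponential densities.
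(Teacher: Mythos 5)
Your proposal is correct and follows essentially the same route as the paper: reduce the infinite-horizon fixed-point equation $\phi=\mathcal{K}_\infty\phi$ (with the exponential joint density) to the second-order ODE $cu\,\phi''(u)+(\lambda u-\alpha c)\phi'(u)=0$ by differentiating twice, then fix the constants via $\phi(0)=0$ and $\lim_{u\to\infty}\phi(u)=1$. Your intermediate step of integrating out $X$ to get a tail-integral form, and your remark on justifying the differentiations via the Neumann series, are just more explicit versions of what the paper does implicitly.
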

\begin{proof}
Here we give a direct proof using only differentiation.
From 
\eqref{SPE-phi-infty} and using the joint density of $(W, X),$ 
$f_{W,X} (w,x)=  \lambda \alpha e^{-\lambda w -\alpha x}$,
we have that the probability of default $\phi$ satisfies
\begin{equation}\label{phi++}
\begin{split}
 \phi(u) 
&= 
\int_0^{\infty} \int_{0}^{\frac{u}{c}}
\phi (e^{x} (u -cw) ) 
\lambda \alpha e^{-\lambda w -\alpha x} \, dw\, dx
.
\end{split}
\end{equation}
By differentiating both sides of \eqref{phi++}, twice, we obtain the following ordinary differential equation with non-constant coefficients
\begin{equation*}
\begin{split}
\phi''(u)
= \left(-\frac{\lambda }{c} +\frac{\alpha}{u}\right) \phi' (u),
\end{split}
\end{equation*}
equipped with two boundary conditions, one at infinity and one at zero.
Using the boundary condition $\phi(0)=0$, the solution of the differential equation reads
\begin{equation*}
\begin{split}
\phi(u) 
 = C \left(\frac{c}{\lambda}\right)^{\alpha +1}  
\int_0^{\frac{\lambda u}{c}}  z ^{ \alpha}\exp ( -z) dz. 
\end{split}
\end{equation*}
Furthermore, from
the infinity condition
$\lim_{u \to \infty }
\phi(u) = 1,$
$C = \frac{1}{\Gamma (\alpha  +1)} \left(\frac{\lambda}{c} \right)^{\alpha +1},$ completing the proof.
\end{proof}

\subsection{General Arrivals}
\begin{proposition}\label{prop-hatphi}
If $X$ is exponentially distributed with parameter $\alpha  >0$ and $W$ is a random variable independent of $X$, 
with density a positive integrable
function $ f_W $, then 
the Laplace transform of the ruin probability 
satisfies 
\begin{equation}\label{2-phi12}
\begin{split}
\hat{\psi} (s) 
&= s^{-1} 
(1 - \hat{f}_W (cs))
+ 
\alpha s^{\alpha-1} 
\hat{f}_W(cs)
 \int_0^s u^{\alpha -1} (1 - \hat{f}_W (cu)) e^{\int_u^s\alpha v^{-1}\hat{f}_W(cv)\, dv } \, du,
\end{split}
\end{equation}
where  $\hat{f}$ denotes the Laplace transform of $f$. 
\end{proposition}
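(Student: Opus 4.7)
My approach is to convert \eqref{SPE-psi-infty} into a convolution, Laplace-transform it into an algebraic relation, and pin down the remaining unknown via a scalar ODE in $s$. First I would integrate out $X$ in order to rewrite $\mathcal{K}_{\infty}$. Substituting $y=e^{x}(u-cw)$ in the $x$-integral and then $y\mapsto u-cw$ in the outer integral produces
\begin{equation*}
\mathcal{K}_\infty\psi(u)=\frac{1}{c}\int_0^u f_W\!\bigl(\tfrac{u-y}{c}\bigr)\,H(y)\,dy,\qquad H(y):=\alpha y^{\alpha}\!\int_y^\infty \psi(z)\,z^{-\alpha-1}\,dz,
\end{equation*}
so that Laplace-transforming \eqref{SPE-psi-infty} in $u$ yields $\hat\psi(s)=\hat\psi_0(s)+\hat f_W(cs)\,\hat H(s)$ with $\hat\psi_0(s)=s^{-1}(1-\hat f_W(cs))$. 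The first summand of the proposition is then immediate and the remaining task is to identify $\hat H(s)$.

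Next I would derive a scalar ODE for $\hat H$. Differentiating the integral defining $H$ gives the pointwise identity $yH'(y)=\alpha\bigl(H(y)-\psi(y)\bigr)$. Laplace-transforming this, using $\mathcal{L}\{yg(y)\}(s)=-G'(s)$, $\mathcal{L}\{g'\}(s)=s\hat g(s)-g(0^+)$, together with the boundary value $H(0^+)=\psi^{\infty}(0^+)=1$ (which follows from $\tau_0\equiv 0$), yields $s\hat H'(s)+(\alpha+1)\hat H(s)=\alpha\hat\psi(s)$. Eliminating $\hat\psi$ through $\hat\psi=\hat\psi_0+\hat f_W(cs)\hat H$ then leaves the first-order linear ODE
\begin{equation*}
\hat H'(s)+\frac{\alpha+1-\alpha\hat f_W(cs)}{s}\,\hat H(s)=\frac{\alpha\hat\psi_0(s)}{s},
\end{equation*}
which can be solved by the integrating factor $\mu(s)=s^{\alpha+1}\exp\!\bigl(-\int^{s}\alpha v^{-1}\hat f_W(cv)\,dv\bigr)$; substituting the resulting expression for $\hat H(s)$ back into $\hat\psi=\hat\psi_0+\hat f_W(cs)\hat H$ produces the stated quadrature.

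The main obstacle will be pinning down the constant of integration. Because $\hat f_W(0)=1$, the coefficient $(\alpha+1-\alpha\hat f_W(cs))/s$ has a genuine $1/s$ singularity at the origin and $\mu(s)$ degenerates there, so the correct particular solution must be selected by matching small-$s$ asymptotics (for instance using $\hat\psi(0^+)=\int_0^\infty\psi^{\infty}(u)\,du<\infty$, guaranteed by \eqref{psi_inf_dec}) rather than by direct substitution, and one has to check that $0$ is the legitimate lower limit of integration in the quadrature. A subsidiary technical point is justifying $H(0^+)=1$: the inner integral $\int_z^\infty \psi(y)\,y^{-\alpha-1}\,dy$ blows up like $z^{-\alpha}/\alpha$ as $z\downarrow 0$, so one has to verify that the prefactor $\alpha z^{\alpha}$ exactly cancels this singularity and that the remaining correction terms vanish in the limit.
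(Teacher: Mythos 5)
Your derivation is correct, and it is a mild but genuine variant of the paper's route rather than a copy of it. The paper Laplace-transforms \eqref{SPE-psi-infty} wholesale, swaps the order of integration on the transform side to get $s^{\alpha}\hat{\psi}(s)=s^{\alpha-1}(1-\hat f_W(cs))+\alpha s^{-1}\hat f_W(cs)G(s)$ with $G(s)=\int_0^s x^{\alpha}\hat{\psi}(x)\,dx$, and then solves the resulting linear ODE for $G$ with the boundary value $G(0^+)=0$. You instead integrate out $X$ first in the $u$-domain, exhibit $\mathcal{K}_\infty\psi$ as a convolution with $H(y)=\alpha y^{\alpha}\int_y^\infty \psi(z)z^{-\alpha-1}dz$, derive the pointwise relation $yH'=\alpha(H-\psi)$ and transform it; the two schemes are equivalent under $G(s)=\alpha^{-1}s^{\alpha+1}\hat H(s)$, and your ODE for $\hat H$ is exactly the paper's ODE for $G$ rewritten, solved with the same integrating factor. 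What the paper's choice of unknown buys is a cheaper constant of integration: $G(t)\to 0$ is immediate (though, strictly, one still needs $G(t)\exp(\int_t^s \alpha v^{-1}\hat f_W(cv)\,dv)\to 0$, i.e.\ the same small-$s$ Abelian estimate you flag, so your caution is not misplaced); your choice buys a cleaner probabilistic interpretation of the kernel and makes the convolution structure explicit. One point to note: carried to the end, both computations give the second term as $\alpha s^{-(\alpha+1)}\hat f_W(cs)\int_0^s u^{\alpha-1}(1-\hat f_W(cu))e^{\int_u^s\alpha v^{-1}\hat f_W(cv)\,dv}\,du$, so the exponent $s^{\alpha-1}$ displayed in \eqref{2-phi12} is a typo (the corrected exponent is what is consistent with the exponential-$W$ check and Proposition \ref{ex1}); your claim that the quadrature "as stated" comes out should be read as producing this corrected form. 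Also, the value $H(0^+)=1$ you worry about in fact drops out of the transformed equation (it is a constant annihilated by the $d/ds$), so only mild regularity of $H$ is needed there.
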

\begin{proof}
The joint density function of $(W, X) $ is
$f_{W,X} (w,x)= \alpha f_W(w)e^{-\alpha x}$.
By taking Laplace transform for the both sides of \eqref{SPE-psi-infty}, we have that,
for $s>0$, 
\begin{equation*}\label{label-prop4-1-0}
\begin{split}
\hat{\psi} (s)
=  \frac{1}{s} (1 - \hat{f}_W (cs)) 
+ \int^{\infty}_{0} e^{-su}
 \int^{\infty}_{0}
\int_0^{\frac{u}{c}}
\psi (e^{x}(u-cw)) 
\alpha f_W(w)e^{-\alpha x}\, dw\, dx \, du. 
\end{split}
\end{equation*}
By changing the order of
the integrals of the second term, which is possible since they are all positive, 
we obtain that 
\if1 
\begin{equation*}
\begin{split} 
\hat{\psi} (s) 
&=\frac{1}{s} (1 - \hat{f}_W (cs))
+
\alpha s^{-(\alpha+1) }
\hat{f}_W(cs)
\int^{s}_{0} 
x^{\alpha}
\hat{\psi} (x) 
\, dx 
.
\end{split}
\end{equation*}
This can be re-written
\fi 
\begin{equation*}
s^{\alpha}\hat{\psi} (s) 
= s^{\alpha -1} 
(1 - \hat{f}_W (cs))
+ 
\alpha s^{-1} 
\hat{f}_W(cs)
\int^{s}_{0} 
x^{\alpha}
\hat{\psi} (x) 
\, dx 
.\end{equation*}
By denoting 
$G(s)=\int^{s}_{0} x^{\alpha}\hat{\psi} (x) \, dx, $
we have a separable ordinary differentiable equation in $G(s)$,
\begin{equation*}\label{eq-prop32}
G'(s)= 
s^{\alpha -1} (1 - \hat{f}_W (cs))
+ 
\alpha s^{-1}\hat{f}_W(cs)G(s), 
\end{equation*}
which leads to 
\begin{equation*}\label{eq-prop32-2}
G(s)= 
\left( G(t) + \int_t^s u^{\alpha -1} (1 - \hat{f}_W (cu)) e^{-\int_t^u\alpha v^{-1}\hat{f}_W(cv)\, dv } \, du \right)
e^{\int_t^s\alpha v^{-1}\hat{f}_W(cv)\, dv }, 
\end{equation*}
for $0 <t <s$. Since $\lim_{t \rightarrow 0} G(t) = 0$, 
by taking $t \rightarrow 0$ for the both sides, 
we obtain that  
\begin{equation*}\label{eq-prop32-3}
G(s)= 
 \int_0^s u^{\alpha -1} (1 - \hat{f}_W (cu)) e^{\int_u^s\alpha v^{-1}\hat{f}_W(cv)\, dv } \, du, 
\end{equation*}
and thus we conclude that
\eqref{2-phi12} is verified. 
\end{proof}

\begin{remark}

When $W$ is exponential with parameter $\lambda >0$,
\begin{equation*}
\hat{F}_W(cs) =\int_0^{\infty} e^{-cs x} e^{-\lambda x} dx= \frac{1}{\lambda+cs}
\end{equation*}
and thus
\begin{equation*}
\begin{split}
\hat{\psi} (s) 
 &= \frac{1}{s} - \frac{c}{\lambda+cs} + c\alpha s^{\alpha} (\lambda+cs)^{\alpha -1} \int_0^s \left(\frac{u}{\lambda +cu}\right)^{\alpha }  \left(\frac{1}{u} - \frac{c}{\lambda +cu} \right) \, du,
\end{split} 
\end{equation*}
which after Laplace inversion leads to the results of Proposition \ref{ex1}.

\end{remark}
\subsection{Arrivals with Memory}

When the inter-arrival times
$ W $ are Erlang distributed, with
shape parameter $k$
and rate parameter 
$ \lambda $, we can obtain again explicit results for the ruing probability.
Recall the density function of an Erlang distribution is given by
\begin{equation*}
f(w;k, \lambda) 
= \frac{\lambda^k 
w^{k-1} e^{-\lambda w}}{(k-1)!}, \, x >0,\ 
k \in \mathbf{N}, \lambda >0,\ w >0. 
\end{equation*}
\begin{proposition}\label{ex2}
If $X$ is exponentially distributed with parameter $\alpha > 0$ and $W$ is Erlang  distributed
with shape parameter $2$ and rate parameter $\lambda$, then $\phi$ is given by 
\begin{equation*}\label{phi2}
\begin{split}
\phi(u) 
&= C g(u), \quad u\geq 0
\end{split}
\end{equation*}
where 
\if1
\begin{equation*}
g(u) = \int_0^{\frac{\lambda u}{c}} 
\frac{1}{\sqrt{ y}}
e^{-y}
\cosh \left(2 \sqrt{\alpha  y }\right)
\left(\int_0^
{\frac{\lambda }{c} -y } 
e^{-x}
x^{\alpha -\frac{3}{2}}\, dx\right)
\, dy
\end{equation*}
\fi
\begin{equation*}
g(u) = \int_0^{\frac{\lambda u}{c}} 
\frac{1}{\sqrt{ y}}
e^{-y}
\cosh \left(2 \sqrt{(\alpha +2)   y }\right)
\left(\int_0^
{\frac{\lambda u}{c} -y } 
e^{-x}
x^{\alpha +\frac{1}{2}}\, dx\right) dy, \quad C= \lim_{u \rightarrow \infty}\frac{1}{g(u)}.
\end{equation*}
\end{proposition}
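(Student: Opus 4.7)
The plan is to follow the template of Proposition \ref{ex1}: convert the integral fixed-point equation \eqref{SPE-phi-infty} into an ordinary differential equation by differentiating $\phi$ in $u$ enough times, solve that ODE, and fix the overall constant from the boundary data $\phi(0)=0$ and $\lim_{u\to\infty}\phi(u)=1$.

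Concretely, I would first substitute the joint density $f_{W,X}(w,x)=\alpha\lambda^{2}\,w\,e^{-\lambda w-\alpha x}$ into \eqref{SPE-phi-infty} to obtain
$$\phi(u)=\alpha\lambda^{2}\int_{0}^{\infty}e^{-\alpha x}\int_{0}^{u/c}\phi\bigl(e^{x}(u-cw)\bigr)\,w\,e^{-\lambda w}\,dw\,dx,$$
and then apply the change of variable $v=e^{x}(u-cw)$ in the inner integral, exactly as in Proposition \ref{ex1}, to isolate $\phi(v)$ under an explicit, $u$-dependent weight. Because $\mathrm{Erlang}(2,\lambda)$ is the convolution of two $\mathrm{Exp}(\lambda)$'s, I expect to need one extra differentiation in $u$ compared to Proposition \ref{ex1}: three successive differentiations should eliminate the $v$-integration and leave a linear second-order ODE for $\phi$ with non-constant coefficients of $1/u$-type, the Erlang$(2)$-upgrade of the equation $\phi''(u)=(-\lambda/c+\alpha/u)\phi'(u)$ of Proposition \ref{ex1}.

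The main obstacle will be solving this ODE in closed form and matching the stated formula. The given $g(u)$ is, in the variable $T=\lambda u/c$, a convolution of $y^{-1/2}e^{-y}\cosh\bigl(2\sqrt{(\alpha+2)\,y}\bigr)$ with the incomplete-gamma-type integral $\int_{0}^{T-y}e^{-x}x^{\alpha+1/2}\,dx$. This is the hallmark of a Green's-function representation whose two fundamental solutions live in the modified-Bessel / confluent-hypergeometric family, since $\cosh(2\sqrt{z})=\sum_{n\ge 0}z^{n}/(2n)!$ is closely related to $I_{-1/2}$. A clean route is to pass to the Laplace transform in $u$, turning the convolution into a product and the $1/u$-coefficient into a first-order ODE on the transform side (the same trick used in Proposition \ref{prop-hatphi}), and then invert; alternatively, one can verify the Green's function by direct substitution.

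Finally, the boundary condition $\phi(0)=0$ kills the branch that is irregular at the origin and yields $\phi(u)=Cg(u)$ for some constant $C$, while the condition $\lim_{u\to\infty}\phi(u)=1$ forces $C=\lim_{u\to\infty}1/g(u)$, as claimed. The existence of this limit is built into the double exponential damping $e^{-y}e^{-x}$ in the integrand of $g$ and should be a direct verification via the dominated convergence theorem.
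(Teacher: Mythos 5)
Your ``clean route'' is in fact the paper's route: specialize Proposition \ref{prop-hatphi} to $\hat f_W(cs)=\lambda^2/(\lambda+cs)^2$, show that the transform collapses to $\hat\phi(s)=C\,s^{-1}(s+\lambda/c)^{-(\alpha+2)}\exp\bigl(\tfrac{\alpha\lambda}{cs+\lambda}\bigr)$, and invert it by factoring off a piece of the form $s^{-1/2}e^{a/s}$, whose inverse $\cosh(2\sqrt{at})/\sqrt{\pi t}$ (Erd\'elyi's tables), combined through the convolution theorem and the shift $s\mapsto s+\lambda/c$ with the incomplete-gamma inverse of $s^{-1}(s+\lambda/c)^{-\alpha-3/2}$, produces the two factors $e^{-y}\cosh(\cdot)/\sqrt{y}$ and $\int_0^{\cdot}e^{-x}x^{\alpha+1/2}\,dx$ appearing in $g$. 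But your proposal stops exactly where this work begins: you never evaluate the factor $e^{\int_u^s\alpha v^{-1}\hat f_W(cv)\,dv}$ in \eqref{2-phi12} (this needs the partial fractions $\tfrac{\lambda^2}{v(cv+\lambda)^2}=\tfrac1v-\tfrac{c}{cv+\lambda}-\tfrac{c\lambda}{(cv+\lambda)^2}$ and the observation that the remaining $u$-integral is an exact derivative, which is what makes everything collapse to a single term $CF(s)$), nor do you identify the specific transform pair that produces the $\cosh$; invoking ``a Green's function in the Bessel/confluent family'' or ``verify by direct substitution'' does not establish that the stated $g$ is the right function. This is not a formality: the transform carries $\exp\bigl(\tfrac{\alpha\lambda}{cs+\lambda}\bigr)$ while the stated $g$ has $\cosh\bigl(2\sqrt{(\alpha+2)y}\bigr)$, and deciding which constant belongs inside the $\cosh$ is settled only by actually performing the inversion — the heart of the proof, which your plan leaves unexecuted.

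Your primary route (differentiate the fixed-point equation) is also off in details that matter. Setting $H(u)=u^{\alpha}\int_u^{\infty}\phi(v)v^{-\alpha-1}\,dv$, equation \eqref{SPE-phi-infty} for Erlang$(2,\lambda)$ reads $\phi=\frac{\alpha\lambda^{2}}{c^{2}}\,k*H$ with $k(t)=te^{-\lambda t/c}$, so $\bigl(\frac{d}{du}+\frac{\lambda}{c}\bigr)^{2}\phi=\frac{\alpha\lambda^{2}}{c^{2}}H$ and one further differentiation gives the \emph{third}-order equation
\begin{equation*}
\frac{d}{du}\Bigl(\frac{d}{du}+\frac{\lambda}{c}\Bigr)^{2}\phi(u)
=\frac{\alpha}{u}\Bigl(\frac{d}{du}+\frac{\lambda}{c}\Bigr)^{2}\phi(u)-\frac{\alpha\lambda^{2}}{c^{2}u}\,\phi(u),
\end{equation*}
i.e.\ second order in $\phi'$, not the ``second-order ODE for $\phi$'' you predict. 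Moreover your boundary-condition argument fails here: near $u=0$ the Frobenius exponents for $\phi'$ are $0$ and $\alpha+1$, both branches are regular and both yield $\phi(0)=0$ after integration, so $\phi(0)=0$ kills nothing, and together with $\phi(\infty)=1$ a one-parameter family survives. One must extract the extra condition $\phi(u)=O(u^{2})$ (equivalently $\phi'(0)=0$) from the integral equation itself to select $Cg$; the paper avoids this issue by working on the transform side, where the equation is first order and the constant is fixed by $G(0^+)=0$. Your closing remark is fine: $y^{-1/2}e^{-y}\cosh(2\sqrt{\cdot\,y})$ is integrable on $(0,\infty)$, so $g$ has a finite positive limit and $C=\lim_{u\to\infty}1/g(u)$ is well defined.
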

\begin{proof}
Here $f_W(w)$ is $f(w;2,\lambda)$. The Laplace transform of $f_W$ is given by 
\begin{equation*}
\begin{split}
\hat{f}_W (s) &= \int_0^{\infty} 
e^{-sw}f_W(w)
\, dw = \frac{\lambda^2}{(s+ \lambda )^2}, 
\end{split} 
\end{equation*}
and its derivative is 
\begin{equation*}
\begin{split}
(\hat{f}_W)' (s) &=  -\frac{2\lambda^2}{(s+ \lambda )^3}. 
\end{split} 
\end{equation*}
\color{black}
By Proposition \ref{prop-hatphi}, we have that,
for some constant $C$, 
\begin{equation*}\label{prop6-1}
\begin{split}
\hat{\phi}(s)
&= C F(s) 
, \quad s>0,
\end{split}
\end{equation*}
where 
$
F(s)= 
\frac{1}{ s
\left(s + \frac{\lambda}{c} \right)^{2+\alpha}}
 e^{
\frac{\alpha  \lambda}{cs+\lambda}
}  \quad s> 0.
$
We claim that
\begin{equation}\label{prop6-in-F}
\begin{split}
&\mathcal{L}^{-1}
(F)(t)
\\&= 
\frac{1}{\Gamma (\alpha +\frac{3}{2})\sqrt{\pi }}
(\frac{c}{\lambda })^{\alpha+2 }
\int_0^{\frac{\lambda t}{c}} 
\frac{1}{\sqrt{ y}}
e^{-y}
\cosh \left(2 \sqrt{(\alpha +2)   y }\right)
\left(\int_0^
{\frac{\lambda t}{c} -y } 
e^{-x}
x^{\alpha +\frac{1}{2}}\, dx\right)
\, dy.
\end{split}
\end{equation} 
This can be seen 
in the following way. 
We set 
 $F_1 (s) = s^{-1}(s +\frac{\lambda}{c}) ^{-\alpha -\frac{3}{2} }$ 
and 
$F_2 (s) = s^{-\frac{1}{2}}
e^{\frac{(\alpha +2)\lambda }{cs}}$. 
Then the inverse Laplace transform of $F_1$ and 
$F_2$ are given by  
\begin{equation*}\label{prop6-in-F1}
\mathcal{L}^{-1}
(F_1)
(t)=
\frac{1}{\Gamma (\alpha +\frac{3}{2})}
(\frac{c}{\lambda })^{\alpha +\frac{3}{2}}
\int_0^{\frac{ \lambda t}{c}} 
e^{-x}
x^{\alpha +\frac{1}{2}}\, dx, 
\end{equation*}
and 
\begin{equation*}
\mathcal{L}^{-1}
(F_2)
(t)=\frac{1}{\sqrt{\pi t}}
\cosh \left(2 \sqrt{\frac{(\alpha +2) \lambda t}{c}}\right),
\end{equation*}
(see e.g. Chapter 5.5 (32) in \cite
{MR0061695}). 
Therefore we obtain \eqref{prop6-in-F}.
Hence we conclude that 
\begin{equation*}\label{prop6-1-fin}
\phi(s)
= C \mathcal{L}^{-1}(F)(s).
\end{equation*}
{where $\mathcal{L}^{-1}(F)$ is our $g$}.
Since $\lim_{u \rightarrow \infty }\phi(u) =1$, 
$C= 
\lim_{u \to \infty}\frac{1}{g(u)}$. 
\end{proof}
\color{black}

\subsection{Randomized Arrival Times}\label{random-lambda}
  We assume that 
$W$ is exponentially distributed with random parameter $\Lambda.$ For any given $\Lambda=\lambda$, $W$ is exponentially distributed with random parameter $\lambda.$ This special dependence structure is referred to as conditional independence.
As in \cite{MR2799308}, for every given $\Lambda=\lambda$, we calculate the ruin probability, say $\psi_{\lambda}(u),$ and then integrate over all the possible values of $\Lambda$, with distribution function $F_{\Lambda}$, leading to
$$\psi(u)=\int_0^{\infty} \psi_{\lambda}(u) d F_{\Lambda}(\lambda).$$
From
\eqref{prop4-phi}, 
\begin{equation}\label{20180316-1}
\begin{split}
P (\tau_u = \infty | \Lambda = \lambda )
= :
\phi_{\lambda}(u) 
&:=  \frac{1}{\Gamma (\alpha +1 )} 
\int_{0 }^{\frac{\lambda u}{c}}  z ^{\alpha}\exp ( -z)\, dz,
\\
\end{split}
\end{equation}
for each $\lambda (>0)$, 
and then
the probability of loan repayment satisfies 
\begin{equation*}
\begin{split}
\phi(u)&=\int_0^{\infty} \phi_{\lambda}(u) d F_{\Lambda}(\lambda),
\end{split}
\end{equation*}
where $ F_{\Lambda}(\lambda)$ is the distribution function of $\Lambda$. 
When $\Lambda$ is Erlang distributed with parameter 
$(k, \frac{1}{\theta} )$ with $k,\ \theta >0$, namely,
\begin{equation}\label{rmdG}
dF_{\Lambda} (\lambda) = \frac{1}{\Gamma(k) \theta^k}\lambda^{k-1}e^{-\frac{\lambda}{\theta} } \, d \lambda,
\end{equation}
we can derive explicitly the probability of default.
\begin{theorem}\label{thm-randomized-lambda}
The probability of default 
under \eqref{20180316-1} and \eqref{rmdG}
is expressed by 
\begin{equation}\label{randomlambda-2}
\begin{split}
\phi(u)
&=  \frac{\Gamma(k+\alpha +1)}{\Gamma(\alpha +1)\Gamma(k) }
B_{{\frac{u \theta }{c+u \theta}}} (\alpha +1, k), 
\end{split}
\end{equation}
where $B_x(u,v) = \int_0^x y^{u-1}(1-y)^{v-1} \, dy$ is the incomplete beta function.
\end{theorem}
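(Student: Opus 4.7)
The starting point is the mixture representation
\[
\phi(u)=\int_0^\infty \phi_\lambda(u)\,dF_\Lambda(\lambda)
=\frac{1}{\Gamma(\alpha+1)\Gamma(k)\theta^k}\int_0^\infty\!\!\int_0^{\lambda u/c} z^\alpha e^{-z}\,dz\;\lambda^{k-1}e^{-\lambda/\theta}\,d\lambda,
\]
obtained by plugging \eqref{20180316-1} and \eqref{rmdG} into the conditional-independence formula. The overall plan is to reduce this double integral to an incomplete beta via one rescaling of the inner variable, an application of Fubini, evaluation of a Gamma integral in $\lambda$, and a final change of variable that matches the shape of $B_x(\alpha+1,k)$.

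First, in the inner integral I would substitute $z=\lambda u t/c$, so that $dz=(\lambda u/c)\,dt$ and $t$ ranges over $(0,1)$. This turns the inner integral into $(\lambda u/c)^{\alpha+1}\int_0^1 t^\alpha e^{-\lambda u t/c}\,dt$. After inserting this expression, swapping the $t$- and $\lambda$-integrals (legitimate by non-negativity), and collecting the $\lambda$ factors, the $\lambda$-integral becomes the standard Gamma integral
\[
\int_0^\infty \lambda^{k+\alpha}\,e^{-\lambda(1/\theta+ut/c)}\,d\lambda
=\frac{\Gamma(k+\alpha+1)}{(1/\theta+ut/c)^{k+\alpha+1}}.
\]
Rewriting $1/\theta+ut/c=(c+u\theta t)/(c\theta)$ and simplifying the constants leaves
\[
\phi(u)=\frac{u^{\alpha+1}\theta^{\alpha+1}c^{k}\Gamma(k+\alpha+1)}{\Gamma(\alpha+1)\Gamma(k)}\int_0^1\frac{t^\alpha}{(c+u\theta t)^{k+\alpha+1}}\,dt.
\]

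The last step is the change of variable $y=u\theta t/(c+u\theta t)$, which sends $t=0$ to $y=0$ and $t=1$ to $y=u\theta/(c+u\theta)$. A direct computation gives $1-y=c/(c+u\theta t)$, $t=cy/(u\theta(1-y))$, and $dt=c\,dy/(u\theta(1-y)^2)$, whence
\[
\frac{t^\alpha}{(c+u\theta t)^{k+\alpha+1}}\,dt=\frac{y^\alpha(1-y)^{k-1}}{(u\theta)^{\alpha+1}c^{k}}\,dy.
\]
Substituting this back cancels exactly the prefactor $u^{\alpha+1}\theta^{\alpha+1}c^k$ and produces the incomplete beta function $B_{u\theta/(c+u\theta)}(\alpha+1,k)$, yielding \eqref{randomlambda-2}.

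The only real obstacle is bookkeeping in the final change of variable: one must verify that the exponents on $c$, $u$, $\theta$, $y$ and $1-y$ all match so that the constants collapse and a clean beta kernel $y^\alpha(1-y)^{k-1}\,dy$ emerges with the correct upper limit. Once the $y$-substitution is executed carefully, the identification with $B_{u\theta/(c+u\theta)}(\alpha+1,k)$ is immediate, and Fubini is trivially justified at every stage because all integrands are non-negative.
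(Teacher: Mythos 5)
Your computation is correct: the rescaling $z=\lambda u t/c$, the Tonelli swap, the Gamma integral $\int_0^\infty \lambda^{k+\alpha}e^{-\lambda(1/\theta+ut/c)}\,d\lambda=\Gamma(k+\alpha+1)\bigl(\tfrac{c\theta}{c+u\theta t}\bigr)^{k+\alpha+1}$, and the final substitution $y=u\theta t/(c+u\theta t)$ all check out, and the exponents on $c$, $u\theta$, $y$, $1-y$ do collapse exactly as you claim, giving \eqref{randomlambda-2}. Your route differs from the paper's: there, one differentiates the mixture representation with respect to $u$, which reduces the $\lambda$-integral at the derivative level to the same Gamma integral and yields
\begin{equation*}
\phi'(u)=\frac{\Gamma(k+\alpha+1)}{\Gamma(\alpha+1)\Gamma(k)}\,\frac{\theta}{c}\left(\frac{c}{c+u\theta}\right)^{k+1}\left(1-\frac{c}{c+u\theta}\right)^{\alpha},
\end{equation*}
after which one integrates back in $u$, fixes the constant via $\phi(0)=0$ (inherited from $\phi_\lambda(0)=0$), and changes variables to recognize the incomplete Beta. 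The two arguments hinge on the same Gamma integral and essentially the same Beta-type substitution, but yours works entirely at the level of the double integral with non-negative integrands, so the only measure-theoretic input is Tonelli, whereas the paper's argument implicitly requires differentiating under the integral sign and then an antiderivative-plus-boundary-condition step. Your version is slightly more self-contained; the paper's version has the minor advantage of exhibiting the density $\phi'(u)$ of the defaulting-time mixture explicitly along the way, which also makes the tail order $\mathcal{O}(1/u^k)$ noted in the subsequent remark immediate.
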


\begin{proof}
One has that 
\begin{equation*}\label{randamlambda-1}
\begin{split}
\phi(u)
&= \int_0^{\infty} \phi_{\lambda} (u)\frac{1}{\Gamma(k) \theta^k}\lambda^{k-1}e^{-\frac{\lambda}{\theta} } \, d \lambda. 
\end{split}
\end{equation*}
Differentiating both sides, we have that 
\begin{equation*}
\begin{split}
\phi'(u)
&=  \frac{\Gamma(k+\alpha +1)}{\Gamma(\alpha +1)\Gamma(k) }
 \left(\frac{\theta}{c} \right)
\left(\frac{c  }{c+u \theta}\right)^{k+1}
 \left(1- \frac{c }{c+u \theta}\right)^{\alpha }
. 
\end{split}
\end{equation*}
Therefore for some constant $C$, the probability of default is given by 
\begin{equation*}
\begin{split}
\phi(u)
&
=  \frac{\Gamma(k+\alpha +1)}{\Gamma(\alpha +1)\Gamma(k) }\left(\frac{\theta}{c} \right) \int_0^u \left(\frac{c  }{c+x \theta}\right)^{k+1}
 \left(1- \frac{c }{c+x \theta}\right)^{\alpha} \, dx +C. 
\end{split}
\end{equation*}
Since $\phi_{\lambda} (0) = 0 $ for each $\lambda>0$, and $\phi (0)=0$, 
we have that $C=0$.
\if1
, that is, 
\begin{eqnarray*}
\begin{split}
\phi(u)
&=  \frac{\Gamma(k+\alpha +1)}{\Gamma(\alpha +1 )\Gamma(k) }\left(\frac{\theta}{c} \right)
 \int_0^u \left(\frac{c  }{c+x \theta}\right)^{k+1}
 \left(1- \frac{c }{c+x \theta}\right)^{\alpha} \, dx 
. 
\end{split}
\end{eqnarray*}
\fi 
By a further change of variables, we
\if1 
have that 
\begin{equation*}
\begin{split}
 \int_0^u \left(\frac{c  }{c+x \theta}\right)^{k+1}
 \left(1- \frac{c }{c+x \theta}\right)^{\alpha} \, dx 
= \frac{c}{\theta} 
B_{{\frac{u \theta }{c+u \theta}}} (\alpha +1, k).  
\end{split}
\end{equation*} 
Hence 
\fi 
conclude that the probability of default is expressed 
as in \eqref{randomlambda-2}. 
\end{proof}

\begin{remark}
We note that the order of $\phi(u)$ is $\mathcal{O} (1/u^k)$. 
\end{remark}

\section{Numerical experiments} \label{numerical}
{In this section we present firstly two algorithms for the calculating the probability of  default (ruin), apply them to some concrete numerical examples/chosen parameters, then analyse the corresponding premium rates.}
\subsection{Simulation for finite-time ruin versus infinite-time one}\label{Nclexp}
We present two algorithms. Algorithm 2 is faster and less variant than the Algorithm 1. However, Algorithm 2 can only be used for the case that inter-arrival is exponential distributed (or conditional exponential distributed for randomized arrival times model).
\subsection*{Algorithm 1}
First, an algorithm which simulates the default state up to a given finite horizon time $t$ by simulating scenarios of cash flows: 
\begin{align*}
U_t = c \sum_{n=1}^{\infty}\left[ 
1_{[T_n, T_{n+1})(t)}(t-T_n)e^{-\sum_{i=1}^n X_i}+
\sum_{i=1}^n (T_i-T_{i-1})e^{-\sum_{i=1}^{i-1} X_i}
\right].
\end{align*}
\begin{itemize}
    \item Initiate $k =0 $,  $T(0) = 0$, $X(0)=0$ as time and effect at time $0$.
    \item While $T(k)<t$
    \begin{itemize}
        \item $k = k+1$,
        \item Simulate $(W,X)$ given a specific distribution,
        \item Inter-arrival time $W(k) = W$,
        \item Arrival time $T(k) = T(k-1)+W(k)$,
        \item Effect $X(k)=X$.
    \end{itemize}
    \item Calculate the cash flow at $t$:$ U_t = (t-T_{k-1})e^{-\sum_{i=1}^{k-1} X(i)}+
\sum_{i=1}^{k-1} W(i){e^{-\sum_{i=1}^{i-1} X(i)}} $.
\item Determine default state $I = 1_{\{U_t<u\}}$
\end{itemize}
Repeat the procedure $N$ times (with $N$ large).
The sample average of $I$ is an estimation for the default probability $\phi(u,t)$.

\subsection*{Algorithm 2}
In the memoryless arrival model, the number of arrivals up to time $(T_n)_{n\geq 0}$ is a homogeneous Poisson process with parameter $\lambda$. The number of arrivals up to $t$, 
$ 
N_t = \sum_{n=1}^{\infty}1_{\{T_n\leq t\}}$
is also Poisson distributed with parameter $\lambda$. 
Given $N_t=n$, arrival times $T_1, \ldots, T_n$ is the ordered statistics of $n$ i.i.d $U_1, \ldots U_n$ with uniform distribution on $[0,t]$. 
So we can get the algorithm as follows:
\begin{itemize}
    \item Simulate a random number $n$ from Poisson distribution with parameter $\lambda t$.
    \item Simulate $n$ random number $U_1, ...,U_n$ from $n$ i.i.d uniform distribution $U([0,t])$.
    \item Sort $U_1, ...,U_n$ to get arrival time $T_1, \ldots, T_n$.
    \item Simulate effect $X_1, ..., X_n$ given a specific distribution.
     \item Calculate cash flow at $t$: 
     $ 
U_t = (t-T_{k-1})e^{-\sum_{i=1}^{k-1} X(i)}+
\sum_{i=1}^{k-1} W(i)
{ e^{-\sum_{i=1}^{i-1} X(i)}}$.
\item Determine the default state $I = 1_{\{U_t<u\}}$.
\end{itemize}
Repeat the procedure $N$ times (with $N$ large).
The expectation of $I$ is an estimation for default probability $\phi(u,t)$.
%
%

\color{black}
\subsection{Risk Adjusted Premium Rates}
As in \cite{Ragnar}, imposing a solvency level on $\psi(u)$ we could derive $c$. Comparing the resulting $c$ with the mortgage repayment rate, we could find out the amount that would be considered as premium.
 We are conducting a sensitivity analysis to see how each variable  impacts the results. 
The aim is to find the values of the parameters which minimise the associated risks, but still keep the mortgage attractive, i.e. keep the monthly payments relatively low. 
In real world the parameters $\alpha$ and $\lambda$ would be estimated using historical data, but for our analysis purposes, we will start from some given values.  

\subsection*{Memoryless Arrivals}
Let's assume \textit{X} follows an exponential distribution with parameter $\alpha$,  and W follows an exponential distribution with a parameter $\lambda$. \textit{X} and \textit{W} are independent of each other. Then, according to Proposition \ref{ex1}, for any \textit{u $\geq$ 0},  the probability of default is an incomplete Gamma function, \eqref{prop4-phi}.
%
{In our analysis, we focus on the repayment rate (with insurance premium included), as a percentage of the loan, namely $c/u$. When we impose a fixed solvency target, $\epsilon$, namely $\phi (u) \leq \epsilon$, from $
\frac{1}{\Gamma (\alpha+1)}\int_{0}^{\frac{\lambda u}{c}}z^{\alpha}e^{-z}dz \leq \epsilon $ we obtain that $\frac{c}{u} \geq \frac{\lambda}{\Gamma_{\alpha}^{-1} (\epsilon)} $, 
where $\Gamma_{\alpha}^{-1}$ is the inverse function of  regularized incomplete Gamma with parameter $\alpha$. 
Table \ref{table:c-u} presents the minimum values of   $\frac{c}{u}=\frac{\lambda}{\Gamma_{\alpha}^{-1} (\epsilon) },$ for $\epsilon= 0.00001$, when varying the parameters $\alpha $ and $\lambda$. }
\begin{table}[htbp]
\caption{$\frac{c}{u}$ for different values of the parameters $\lambda$ and $\alpha$} 
\centering 	 
\begin{tabular}{c|cccccc} 				
\hline\hline 											
$\lambda \backslash \alpha$ &1&2&3&4&5&6	  \\ 	\hline
1&
0.0850603 & 0.0717969 & 0.0628385 & 0.0562366 & 0.0511059 & 0.0469712  \\
2&
0.170121 & 0.143594 & 0.125677 & 0.112473 & 0.102212 & 0.0939424  \\
3&0.255181 & 0.215391 & 0.188515 & 0.16871 & 0.153318 & 0.140914 \\
4&0.340241 & 0.287188 & 0.251354 & 0.224946 & 0.204424 & 0.187885 \\
5&0.425301 & 0.358985 & 0.314192 & 0.281183 & 0.25553 & 0.234856  \\
\hline\hline
\end{tabular}
\label{table:c-u} 
\end{table}
Moreover, one can simulate histograms of the cash flow at a specific times. For instance, for $\lambda = 0.5$ and $\alpha = 20$, the histograms at $T= 100$ (Figure \ref{fig2} and Figure \ref{fig3}), present very small differences,  which are caused by the different estimates of the  probability of default. 
\begin{figure}[H]
    \begin{tabular}{cc}
      \begin{minipage}[t]{0.45\hsize}
        \centering
    \includegraphics[width=1\linewidth]{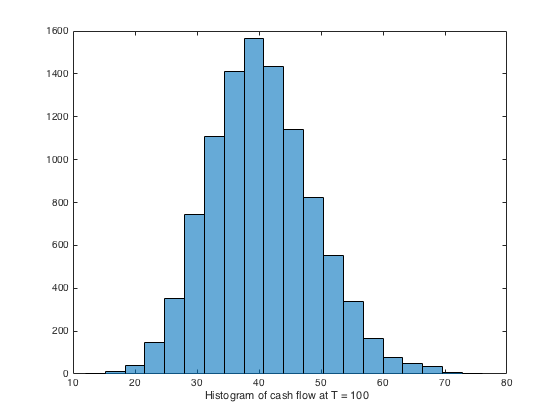}
    \caption{Histogram for cash flow at a given time by Algorithm 1} 
    \label{fig2}
      \end{minipage} &
      \begin{minipage}[t]{0.45\hsize}
        \centering
        \includegraphics[width=1\linewidth]{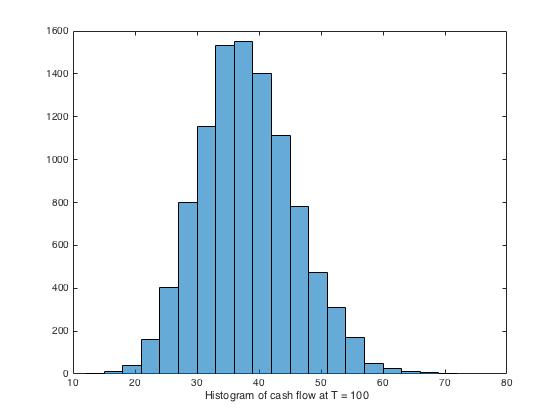}
    \caption{Histogram for cash flow at a given time by Algorithm 2} 
    \label{fig3}
  \end{minipage}
    \end{tabular}
  \end{figure}

\if1 
\begin{figure}[htbp]
    \centering
    \includegraphics[width=0.5\linewidth]{Histo_Alg2_T100_alph20_lambda05}
    \caption{Histogram for cash flow at a given time by Algorithm 1} 
    \label{fig2}
\end{figure}
\begin{figure}[htbp]
    \centering
    \includegraphics[width=0.5\linewidth]{Histogram_MA_T100_alpha20_lambda05.png}
    \caption{Histogram for cash flow at a given time by Algorithm 2} 
    \label{fig3}
\end{figure}

\begin{figure}[htbp]
\centering
\includegraphics[width =0.5\linewidth]{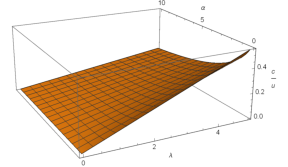}
\caption{Graph of table 1 }
\label{figure:c-u} 
\end{figure}
\fi 
\subsection*{Randomised Arrivals}
Recall from Theorem \ref{thm-randomized-lambda}, that for any
$u \geq 0$,
the probability of default is {an incomplete Beta function, \eqref{randomlambda-2}.}
We {consider $\Lambda$, such that} $E[\Lambda]=k \theta$ and $V[\Lambda] = k \theta^2$. 
When {we impose a solvency target $\phi (u) \leq \epsilon$} and when $k \theta =1\ (\theta = 1/k)$,  then  from Theorem \ref{thm-randomized-lambda},
$
\frac{c}{u} \geq \frac{(1- B^{-1}_{\alpha +1, k} (\epsilon)) }{k B^{-1}_{\alpha +1, k} (\epsilon)} $, 
 where $B^{-1}_{\alpha +1, k}$ is the inverse function of  regularised incomplete Beta with parameters $\alpha+1$ and $k$. %
Table \ref{table:c-u-l} describes the minimum values realized by $c/u$, for given parameters $\alpha $ and $k$, under a solvency target $\epsilon=0.0001.$
\begin{table}[htbp]
\caption{$\frac{c}{u}$ for different values of the parameters $k$ and $\alpha$} 
\centering 											
\begin{tabular}{c|ccccccc} 								
\hline\hline 											
$k \backslash \alpha$ &1&2&3&4&5&6&7  \\ 	\hline
1&99. & 20.5443 & 9. & 5.30957 & 3.64159 & 2.72759 & 2.16228  \\
3&81.0935 & 14.9698 & 6.08913 & 3.41611 & 2.26003 & 1.64776 & 1.27925 \\
5&76.9924 & 13.6923 & 5.42064 & 2.98014 & 1.94112 & 1.39791 & 1.07451 \\
7&75.1637 & 13.1197 & 5.11958 & 2.78298 & 1.79635 & 1.28412 & 0.981004\\
9&74.1276 & 12.794 & 4.94774 & 2.67007 & 1.7132 & 1.21859 & 0.927026 \\
\hline\hline
\end{tabular}
\label{table:c-u-l} 
\end{table}
\if1 
\begin{figure}[htbp]
\centering
\includegraphics[width = 0.5\linewidth]{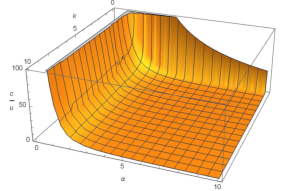}
\caption{Graph of table 2}
\label{figure:c-u-l} 
\end{figure}
\fi 
For inter-arrival times $W$ exponentially distributed with parameter $\lambda = 0.2$, $X$ is exponentially distributed with parameter $\alpha =20$,
and $\frac{u}{c} = 50$, Figure \ref{fig1} provides information about the convergence behavior of the default probability, as the horizon time tends to infinity. 
After $N=10^5$ simulations, one can see that the probability of default in finite time horizon converges exponentially to the one in infinite time horizon, namely, in this particular example, to $0.0035.$ (Figure \ref{fig3+}).
\begin{figure}[H]
    \begin{tabular}{cc}
      \begin{minipage}[t]{0.45\hsize}
        \centering
    \includegraphics[width=1\linewidth]{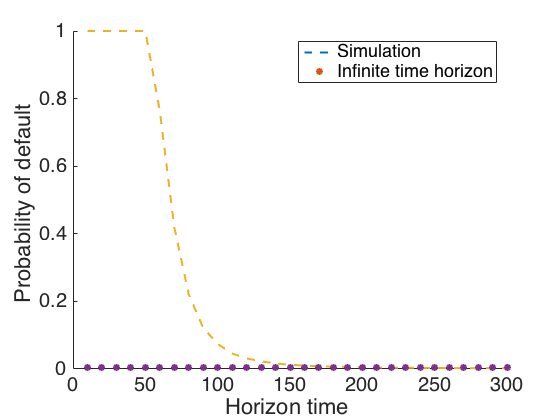}
    \caption{A comparison of probability of default by simulation} 
    \label{fig1}
      \end{minipage} &
      \begin{minipage}[t]{0.45\hsize}
        \centering
    \includegraphics[width=1\linewidth]{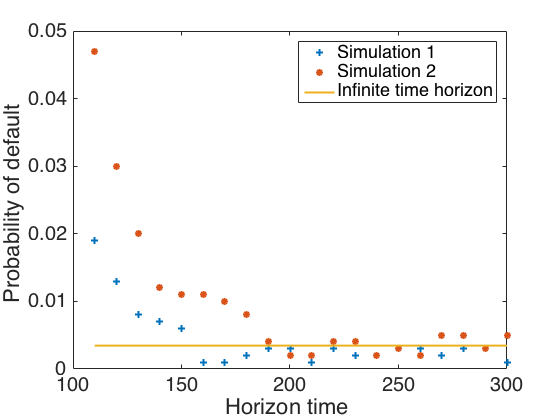}
    \caption{Convergence of probability of default from finite horizon time to infinite horizon time} 
   \label{fig3+}
  \end{minipage}
    \end{tabular}
  \end{figure}

\if1 
\begin{figure}
    \centering
    \includegraphics[width=0.5\linewidth]{finiteVsInfinite}
    \caption{A comparison of probability of default by simulation} 
    \label{fig1}
\end{figure}
 
\begin{figure}
   \centering
    \includegraphics[width=0.5\linewidth]{zoom.png}
    \caption{Convergence of probability of default from finite horizon time to infinite horizon time} 
   \label{fig3+}
\end{figure}
\fi 
\subsection*{Comparison 1}
For $\lambda >0$, we assume that $k = 1$ and $\theta = \lambda$, meaning $E[\Lambda]=\lambda$ and $Var[\Lambda]=\lambda$. 
The graph \ref{figure:lambda-phi} 
show 
the difference between the  probability of loan repayment in the Memoryless Arrivals (MA) case 
versus the Randomize Arrivals (RA) case, when $\alpha  = 20$, 
$c = 2$ and $u = 100$. 
Here the probabilities of default are given by \eqref{prop4-phi} and \eqref{randomlambda-2}, respectively.


\subsection*{Comparison 2}
For $\lambda >0$ and $v >0$, we assume that $k = \frac{\lambda^2}{v} $ and $\theta = \frac{v}{\lambda}$, meaning $E[\Lambda]=\lambda$ and $Var[\Lambda]=v$. In other words,  the variance of $\Lambda$ is fixed. 
The graph \ref{figure:lambda-phi2} 
shows 
the difference of the default probabilities  in the  MA  case 
versus the RA case, when $\alpha  = 20$, 
$c = 2$ and $u = 100$.

\begin{figure}[H]
    \begin{tabular}{cc}
      \begin{minipage}[t]{0.45\hsize}
        \centering
\includegraphics[width = 1\linewidth]{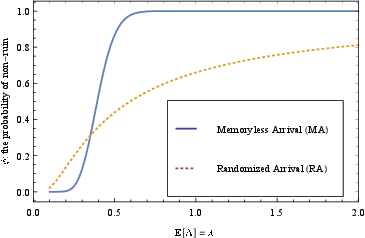}
\caption{Default probabilities}
\label{figure:lambda-phi} 
      \end{minipage} &
      \begin{minipage}[t]{0.45\hsize}
        \centering
\includegraphics[width=1\linewidth]{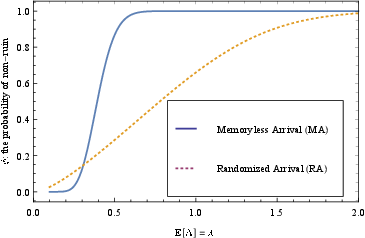}
\caption{Default probabilities}
\label{figure:lambda-phi2} 
 \end{minipage}
    \end{tabular}
  \end{figure}

\if1 
\begin{figure}[htbp]
\centering
\includegraphics[width = 1\linewidth]{graph-ml-rat}
\caption{Default probabilities}
\label{figure:lambda-phi} 
\end{figure}

\begin{figure}[htbp]
\centering
\includegraphics[width=0.5\linewidth]{graph-ml-rat-C}
\caption{Default probabilities}
\label{figure:lambda-phi2} 
\end{figure}
\fi 

%

\section{Conclusion} \label{conclusion}
Although earthquakes are not rare events in Japan, for cultural reasons,  only a small part of home-owners in Japan have the relevant insurance cover.  Thus in the event of a disaster produced by an earthquake,  the uninsured home-owners, roughly 70\% of the market, could end up with a completely demolished property and an outstanding mortgage loan (e.g in 2009 only 23\% had earthquake insurance), the double-debt problem. The paper proposes a solution to this problem specific to Japanese mortgages, via an insurance mechanism incorporated in the mortgage repayment scheme. The premium of the insurance can be determined from the probability of default, by solvency control (i.e. less than a given/impose upper bound). By deriving the probability of default of the loan provider in infinte-time, one can then infer close approximations for any finite-time horizons. The modeling framework is that of exponential functional of renewal-reward processes and the methodology stems from mathematical risk theory. The set-up and the results could be translated in other financial or actuarial applications. For instance, as in \cite{Pflug}, the exponential functional of a reward process can model the wealth of one person/ family (in danger of down-crossing the poverty line) with an insurance proposal to protect the vulnerable (close to the poverty line) from falling into {\it poverty traps} (which are absorbing states).  \\
\section*{Acknowledgments}
The authors would like to thank Professor Ohgaki for fruitful discussions during the preparation of this paper. Also, many thanks to BNP Paribas-Cardiff for their input regarding the practicality of the model.
%
\bibliographystyle{plain}
\bibliography{bibliography}
%
\appendix 
\section{Proof of \eqref{thm-l1-Kn-rec}}
The aim of this section is to prove \eqref{thm-l1-Kn-rec}, 
that is, 
\begin{equation*} 
\begin{split}
&\int_0^\infty  |
\mathcal{K}_{\infty}^n\psi_{0}^{\infty}(u) 
-\mathcal{K}^n\psi_0(u,t)| 
\, du 
\\& \leq
\mathbf{E}[e^{-X}]
\mathbf{E}[I_{\{W<t\}}\int_0^{\infty}
 | \mathcal{K}_{\infty}^{n-1}\psi_{0}^{\infty}(u)  - \mathcal{K}^{n-1}\psi_{0}  (u, t-W) |\, du]
\\& \qquad  + c 
\mathbf{E}[e^{-X}]^n \mathbf{P}(W > t) 
\mathbf{E}[ W ]
. \end{split}
\end{equation*}
To obtain \eqref{thm-l1-Kn-rec}, we need the following Lemmas. 
\begin{lemma}\label{Lem-Knh-26}
For $h \in \mathcal{A}_1$, $ t > 0$ and $ n \in \mathbf{N} \cup \{0\}$, it holds that  
\begin{equation}\label{eq-lem-Kn-26}
\begin{split}
\int_0^{\infty} 
\mathcal{K} h (u,t )
\, du  = \mathbf{E}[e^{-X} ] \mathbf{E}[ I_{\{W<t\}} \int_0^{\infty} h(z,t - W ) \, dz ]. \end{split}
\end{equation}
\end{lemma}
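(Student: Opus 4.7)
The plan is to unfold the definition of $\mathcal{K}$, swap the integral over $u$ with the outer expectation via Fubini, and then perform a single change of variable to land on the right-hand side. The product structure of the claim then follows from the independence of $X$ and $W$ within each arrival pair.

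Concretely, starting from $\mathcal{K}h(u,t)=\mathbf{E}[I_{\{W<t\wedge u/c\}}\,h(e^{X}(u-cW),t-W)]$, I would first interchange $\int_0^\infty(\cdot)\,du$ with the expectation. Since the indicator factors as $I_{\{W<t\wedge u/c\}}=I_{\{W<t\}}\,I_{\{u>cW\}}$, the $u$-integral is effectively taken over $(cW,\infty)$ on the event $\{W<t\}$. Next, I would substitute $z=e^{X}(u-cW)$, which is a bijection of $(cW,\infty)$ onto $(0,\infty)$ with Jacobian $du=e^{-X}\,dz$; this produces an $e^{-X}$ prefactor and transforms the inner integral into $\int_0^\infty h(z,t-W)\,dz$.

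After these two manipulations one arrives at
\begin{equation*}
\int_0^\infty \mathcal{K}h(u,t)\,du=\mathbf{E}\!\left[e^{-X}\,I_{\{W<t\}}\int_0^\infty h(z,t-W)\,dz\right].
\end{equation*}
The final step is to observe that the bracketed $W$-measurable quantity $I_{\{W<t\}}\int_0^\infty h(z,t-W)\,dz$ is independent of $X$ under the standing assumption that the marks $W$ and $X$ are independent within each arrival pair; the expectation therefore splits as $\mathbf{E}[e^{-X}]\cdot\mathbf{E}[I_{\{W<t\}}\int_0^\infty h(z,t-W)\,dz]$, which is precisely \eqref{eq-lem-Kn-26}.

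The only real obstacle is justifying Fubini: absolute integrability of the iterated integrand is not obvious a priori, but the same change of variable bounds $\mathbf{E}\!\int_0^\infty I_{\{W<t\wedge u/c\}}|h(e^{X}(u-cW),t-W)|\,du$ by $\mathbf{E}[e^{-X}]\,\|h\|_{\mathcal{A}_1}$, which is finite by the $\mathcal{A}_1$ hypothesis together with nontriviality and positivity of $X$. Beyond this integrability check, the argument is a direct unpacking of definitions.
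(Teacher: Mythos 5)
Your proof is correct and rests on the same mechanism as the paper's: a change of variables whose Jacobian produces the factor $e^{-X}$. The execution differs in a worthwhile way. The paper expands $\int_0^\infty \mathcal{K}h(u,t)\,du$ into a triple Lebesgue integral against the joint density $f_{W,X}$, substitutes $z=e^{x}(u-cw)$ for the $x$-variable, reorders, and then substitutes $v=\log\frac{z}{u-cw}$ for $u$ to reconstitute the density with the factor $e^{-v}$; you instead stay at the level of the expectation, justify Fubini once (your bound by $\mathbf{E}[e^{-X}]\,\|h\|_{\mathcal{A}_1}$ is exactly what is needed), and perform a single pathwise substitution $z=e^{X}(u-cW)$ with $du=e^{-X}dz$, which is shorter and does not require writing the joint law as a density at all. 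The substantive point you make explicit, and the paper does not, is the last step: both computations naturally terminate at $\mathbf{E}\bigl[e^{-X}I_{\{W<t\}}\int_0^\infty h(z,t-W)\,dz\bigr]$, and passing to the product $\mathbf{E}[e^{-X}]\,\mathbf{E}\bigl[I_{\{W<t\}}\int_0^\infty h(z,t-W)\,dz\bigr]$ genuinely requires independence of $W$ and $X$ within a pair. That hypothesis is not among the standing assumptions of the section (only existence of the joint density $f_{W,X}$, $\mathbf{P}(X>0)>0$, $\mathbf{E}[W]<\infty$ are stated), so the product form of the lemma, and its subsequent use in \eqref{thm-l1-Kn-rec}, is strictly valid only under that additional assumption; you are right to flag it, and without it the identity should be read with the joint expectation on the right-hand side.
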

\begin{proof}
By the definition of $\mathcal{K}$,
\begin{equation}\label{lem-gKL1-26}
 \begin{split}
\int_0^\infty\mathcal{K}h  (u,t)  \, du 
= 
\int_0^\infty \, du 
\int_0^{t \wedge \frac{u}{c}} \, dw 
\int_0^\infty \,dx \, 
h(e^x( u-cw) ,t-w ) 
f_{W,X}(w,x)
. 
\end{split}
\end{equation}
Here we note that $W$ and  $X$ are non-negative valued random variables. 
By change of the variables $
(u, w,x ) \mapsto (u,w, z =e^x( u-cw) ) $, 
\begin{equation*}
\begin{split}
&\mbox{(the right hand side of \eqref{lem-gKL1-26}) }
\\& = 
\int_0^{t} \, dw 
\int_{cw}^\infty \, du 
\int_{u-cw}^\infty \,dz \, 
h(z ,t-w ) 
f_{W,X}(w,\log \frac{z}{u-cw} ) \frac{1}{z}
\\& = 
\int_0^{t} \, dw 
\int_{0}^\infty \,dz \, 
\int_{cw}^{z+ cw} \, du \, 
h(z ,t-w ) 
f_{W,X}(w,\log \frac{z}{u-cw} ) \frac{1}{z}
. \end{split}
\end{equation*}
By changing variables   $(w,z,u) \mapsto (w,z, v= \log \frac{z}{u-cw}) $, we conclude equation \eqref{eq-lem-Kn-26}. 
\end{proof} 
\begin{lemma}\label{Lem-Kng-26}
For $g \in L_1$ and $ n \in \mathbf{N} \cup \{0\}$, it holds that  
\begin{equation}\label{eq-lem-Kgn-26}
\begin{split}
\int_0^{\infty} 
\mathcal{K}^n_{\infty} g (u )
\, du  = \mathbf{E}[e^{-X} ]^n  \int_0^{\infty} g(z ) \, dz 
. \end{split}
\end{equation}
In particular, it holds that 
\begin{equation*}
\int_0^{\infty} 
\mathcal{K}^n_{\infty} \psi_0^\infty(u)
\, du  
=c \mathbf{E}[e^{-X} ]^n\mathbf{E}[W]
. 
\end{equation*}
\end{lemma}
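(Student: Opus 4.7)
The plan is to establish the identity \eqref{eq-lem-Kgn-26} first for $n=1$ by a direct Fubini-plus-change-of-variables argument (essentially the same calculation that drives Lemma \ref{Lem-Knh-26}, only with the time-variable absent), and then iterate. The base case is where the work lies; the induction and the final specialization are one-liners.

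For $n=1$, starting from the definition
\begin{equation*}
\mathcal{K}_\infty g(u) = \int_0^\infty \!\! \int_0^\infty 1_{\{cw<u\}} \, g\bigl(e^{x}(u-cw)\bigr) \, f_{W,X}(w,x) \, dw \, dx,
\end{equation*}
I would integrate both sides in $u$ over $(0,\infty)$ and invoke Fubini (justified by $g \in L_1$ and $f_{W,X}$ being a probability density) to obtain
\begin{equation*}
\int_0^\infty \mathcal{K}_\infty g(u) \, du = \int_0^\infty \!\! \int_0^\infty f_{W,X}(w,x) \int_{cw}^{\infty} g\bigl(e^{x}(u-cw)\bigr) \, du \, dw \, dx.
\end{equation*}
Then, for each fixed $(w,x)$, the substitution $z = e^{x}(u-cw)$, $du = e^{-x} dz$, maps the inner integral to $e^{-x} \int_0^\infty g(z) \, dz$. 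Pulling $\int_0^\infty g(z) \, dz$ out and recognizing that $\int\!\!\int e^{-x} f_{W,X}(w,x) \, dw \, dx = \mathbf{E}[e^{-X}]$ establishes the base case.

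For the inductive step, assume \eqref{eq-lem-Kgn-26} holds for $n-1$. In particular $\mathcal{K}_\infty^{n-1} g \in L_1$, so the $n=1$ identity applies with $g$ replaced by $\mathcal{K}_\infty^{n-1} g$:
\begin{equation*}
\int_0^\infty \mathcal{K}_\infty^n g(u) \, du = \mathbf{E}[e^{-X}] \int_0^\infty \mathcal{K}_\infty^{n-1} g(z) \, dz = \mathbf{E}[e^{-X}]^{n} \int_0^\infty g(z) \, dz.
\end{equation*}
The case $n=0$ is trivial.

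For the particular claim, I would simply note that $\psi_0^\infty(u) = \mathbf{P}(W > u/c)$ is integrable under $\mathbf{E}[W] < \infty$, and by the tail formula (with the change of variable $v = u/c$),
\begin{equation*}
\int_0^\infty \psi_0^\infty(u) \, du = c \int_0^\infty \mathbf{P}(W > v) \, dv = c \, \mathbf{E}[W],
\end{equation*}
so applying \eqref{eq-lem-Kgn-26} with $g = \psi_0^\infty$ yields the stated expression. The only real obstacle is the care required with the change of variables in the base case and the Fubini justification; once those are in place, the rest is mechanical.
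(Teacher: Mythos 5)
Your proposal is correct and follows essentially the same route as the paper: the base case is the Fubini plus change-of-variables computation $z=e^{x}(u-cw)$ (which the paper carries out in the time-dependent setting of Lemma \ref{Lem-Knh-26} and then invokes implicitly), followed by induction and the tail formula $\int_0^\infty \mathbf{P}(W>u/c)\,du = c\,\mathbf{E}[W]$ for the particular case. The only cosmetic point is that $\mathcal{K}_\infty^{n-1}g\in L_1$ is better justified by applying the same computation to $|g|$ (or by the operator-norm bound of Lemma \ref{Lemma-L1}) rather than by the identity for $n-1$ itself, which controls only the signed integral; this is a trivial adjustment, not a gap.
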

\begin{proof}
For $n=0$, clearly \eqref{eq-lem-Kgn-26} holds. 
For $n \geq 1$, we see that 
\begin{equation}\label{eq-lem-Kng1-26}
\begin{split}
\int_0^\infty\mathcal{K}^n_{\infty} g  (u)  \, du 
 = 
\mathbf{E}[e^{-X}]
\int_0^{\infty}
\mathcal{K}^{n-1}_{\infty} g (u)\, du
.\end{split}
\end{equation}
Hence we conclude \eqref{eq-lem-Kgn-26} by induction. 
\end{proof} 
Now we give a proof of \eqref{thm-l1-Kn-rec}.
Let us consider a decomposition of  $\mathcal{K}_{\infty}\psi_{0}^{\infty}(u) $. 
By the definition of $\mathcal{K}$ and $\mathcal{K}_{\infty}$, for $n \in \mathbf{N}$, we see that 
\begin{equation*}\label{thm-n-eq0-1-26}
\begin{split}
&\mathcal{K}_{\infty}^n\psi_{0}^{\infty}(u) 
\\& = 
I_{\{t \leq \frac{u}{c} \}}\mathbf{E} [ ( I_{\{W \leq t\}} +  I_{\{t \leq W \leq \frac{u}{c} \}} )
\mathcal{K}_{\infty}^{n-1}\psi_{0}^{\infty}(e^X (u-cW)) ] 
\\& \qquad + 
I_{\{\frac{u}{c} < t \}}
\mathbf{E} [  I_{\{ W \leq \frac{u}{c} \}} 
\mathcal{K}_{\infty}^{n-1}\psi_{0}^{\infty}(e^X (u-cW)) ] 
\\& = \left(I_{\{\frac{u}{c} < t \}}\mathbf{E} [  I_{\{ W \leq \frac{u}{c} \}} 
\mathcal{K}_{\infty}^{n-1}\psi_{0}^{\infty}(e^X (u-cW)) ] 
+I_{\{\frac{u}{c} < t \}}
\mathbf{E} [  I_{\{ W \leq \frac{u}{c} \}} 
 \mathcal{K}_{\infty}^{n-1}\psi_{0}^{\infty}(e^X (u-cW)) ]\right)
 \\&\qquad + \mathbf{E} [ I_{\{t \leq W \leq \frac{u}{c} \}} )
\mathcal{K}_{\infty}^{n-1}\psi_{0}^{\infty}(e^X (u-cW)) ] 
\\& = 
\mathcal{K} \mathcal{K}_{\infty}^{n-1}\psi_{0}^{\infty}(u,t) 
+ I_{\{t \leq \frac{u}{c} \}}\mathbf{E} [ I_{\{t \leq W \leq \frac{u}{c} \}}
\mathcal{K}_{\infty}^{n-1}\psi_{0}^{\infty}(e^X (u-cW)) ] 
. \end{split}
\end{equation*}
Here $\mathcal{K}_{\infty}^{n-1}\psi_{0}^{\infty}(e^X (u-cW)) $ is identified with $\mathcal{K}_{\infty}^{n-1}\psi_{0}^{\infty}(e^X (u-cW), t-W) $. 
Since the operator $\mathcal{K}$ is linear, 
we have that 
\begin{equation}\label{thm-n-eq0-26}
\begin{split}
&\mathcal{K}_{\infty}^n\psi_{0}^{\infty}(u) -\mathcal{K}^n\psi_{0}(u, t)
\\&= \mathcal{K}  ( \mathcal{K}_{\infty}^{n-1}\psi_{0}^{\infty}(u,t)  - \mathcal{K}^{n-1}\psi_{0}(u,t)  ) 
+ I_{\{t \leq \frac{u}{c} \}}\mathbf{E} [ I_{\{t \leq W \leq \frac{u}{c} \}}
\mathcal{K}_{\infty}^{n-1}\psi_{0}^{\infty}(e^X (u-cW)) ] 
. \end{split}
\end{equation}
Therefore we obtain that 
\begin{equation}\label{thm-n-eq3-26} 
\begin{split}
&\int_0^{\infty} | \mathcal{K}_{\infty}^n\psi_{0}^{\infty}(u) 
-\mathcal{K}^n\psi_0(u,t) | \, du 
\\& \leq 
\int_0^{\infty} 
\mathcal{K}  | \mathcal{K}_{\infty}^{n-1}\psi_{0}^{\infty} - \mathcal{K}^{n-1}\psi_{0} | (u,t)  
\, du 
\\& \qquad 
+\int_0^{\infty} 
 I_{\{t \leq \frac{u}{c} \}}\mathbf{E} [ I_{\{t \leq W \leq \frac{u}{c} \}}
\mathcal{K}_{\infty}^{n-1}\psi_{0}^{\infty}(e^X (u-cW)) ]  \, du 
. \end{split}
\end{equation}
By Fubini's theorem, the second term of the right hand side in \eqref{thm-n-eq0-26} is 
\begin{equation}\label{thm-n-eq1-26}
\begin{split}
\mathbf{E}[e^{-X}] \mathbf{P}(W > t) 
\int_0^{\infty}
 \mathcal{K}_{\infty}^{n-1}\psi_{0}^{\infty} (u)\, du
. \end{split}
\end{equation}
Hence by Lemma \ref{Lem-Knh-26}, Lemma \ref{Lem-Kng-26} and the combination of \eqref{thm-n-eq3-26} and \eqref{thm-n-eq1-26}, we conclude \eqref{thm-l1-Kn-rec}.

\end{document}


\maketitle

\section{A detailed example}

Here we include some equations and theorem-like environments to show
how these are labeled in a supplement and can be referenced from the
main text.
Consider the following equation:
\begin{equation}
  \label{eq:suppa}
  a^2 + b^2 = c^2.
\end{equation}
You can also reference equations such as \cref{eq:matrices,eq:bb} 
from the main article in this supplement.

\lipsum[100-101]

\begin{theorem}
  An example theorem.
\end{theorem}

\lipsum[102]
 
\begin{lemma}
  An example lemma.
\end{lemma}

\lipsum[103-105]

Here is an example citation: \cite{KoMa14}.

\section[Proof of Thm]{Proof of \cref{thm:bigthm}}
\label{sec:proof}

\lipsum[106-112]

\section{Additional experimental results}
\Cref{tab:foo} shows additional
supporting evidence. 

\begin{table}[htbp]
{\footnotesize
  \caption{Example table}  \label{tab:foo}
\begin{center}
  \begin{tabular}{|c|c|c|} \hline
   Species & \bf Mean & \bf Std.~Dev. \\ \hline
    1 & 3.4 & 1.2 \\
    2 & 5.4 & 0.6 \\ \hline
  \end{tabular}
\end{center}
}
\end{table}

\bibliographystyle{siamplain}
\bibliography{references}


\maketitle

\section{A detailed example}

Here we include some equations and theorem-like environments to show
how these are labeled in a supplement and can be referenced from the
main text.
Consider the following equation:
\begin{equation}
  \label{eq:suppa}
  a^2 + b^2 = c^2.
\end{equation}
You can also reference equations such as \cref{eq:matrices,eq:bb} 
from the main article in this supplement.

\lipsum[100-101]

\begin{theorem}
  An example theorem.
\end{theorem}

\lipsum[102]
 
\begin{lemma}
  An example lemma.
\end{lemma}

\lipsum[103-105]

Here is an example citation: \cite{KoMa14}.

\section[Proof of Thm]{Proof of \cref{thm:bigthm}}
\label{sec:proof}

\lipsum[106-112]

\section{Additional experimental results}
\Cref{tab:foo} shows additional
supporting evidence. 

\begin{table}[htbp]
{\footnotesize
  \caption{Example table}  \label{tab:foo}
\begin{center}
  \begin{tabular}{|c|c|c|} \hline
   Species & \bf Mean & \bf Std.~Dev. \\ \hline
    1 & 3.4 & 1.2 \\
    2 & 5.4 & 0.6 \\ \hline
  \end{tabular}
\end{center}
}
\end{table}

\bibliographystyle{siamplain}
\bibliography{references}